\documentclass[11pt,center]{amsart}
\usepackage{amsmath,amssymb,hyperref}

\usepackage{tikz}
\usetikzlibrary{cd,arrows.meta,calc,decorations.markings,patterns}

\usepackage[bb=ams,scr=euler]{mathalpha}
\usepackage{enumitem}
\setenumerate{label=(\arabic*)}

\usepackage{thmtools}
\declaretheorem[name=Theorem]{theorem}
\declaretheorem[name=Lemma,sibling=theorem]{biglemma}
\declaretheorem[name=Definition,sibling=theorem]{bigdefinition}
\declaretheorem[name=Proposition,sibling=theorem]{bigproposition}
\declaretheorem[name=Corollary,sibling=theorem]{corollary}
\declaretheorem[name=Question,sibling=theorem]{question}
\declaretheorem[name=Conjecture,sibling=theorem]{conjecture}

\declaretheorem[name=Lemma,numberwithin=section]{lemma}
\declaretheorem[name=Claim,sibling=lemma]{claim}

\declaretheorem[name=Remark,sibling=lemma]{remark}
\declaretheorem[name=Definition,sibling=lemma]{definition}
\declaretheorem[name=Example,sibling=lemma]{example}

\makeatletter
\newcommand{\abs}[1]{|#1|}
\newcommand{\bd}{\partial}

\renewcommand{\d}{\mathit{d}}
\newcommand{\id}{\mathit{id}}

\newcommand{\R}{\mathbb{R}}

\newcommand{\set}[1]{\left\{#1\right\}}
\newcommand{\Z}{\mathbb{Z}}
\newcommand{\Q}{\mathbb{Q}}
\newcommand{\C}{\mathbb{C}}
\renewcommand\section{\@startsection{section}{1}{0pt}{-3.5ex \@plus -1ex \@minus -.2ex}{2.3ex \@plus.2ex}{\centering\bfseries}}
\renewcommand{\subsection}{\@startsection{subsection}{2}\z@{.5\linespacing\@plus.7\linespacing}{-.5em}{\normalfont\bfseries}}
\makeatother

\date{\today}
\author{Dylan Cant}
\email{dylan@dylancant.ca}
\address{Département de mathématiques et de statistique, Université de Montréal, Pavillon André-Aisenstadt, 2920 Chemin de la Tour, Montréal, Québec, 
Canada}

\begin{document}
\title[On the strong Arnol'd chord conjecture for domains in $\R^{4}$]{The strong Arnol'd chord conjecture for the boundary of a uniformly convex domain in $\R^{4}$}
\begin{abstract}
  Following the idea of Jungsoo Kang and Jun Zhang, we prove the strong Arnol'd chord conjecture for the boundary of a uniformly convex domain in $\R^{4}$, using an ellipsoid embedding construction due to Oliver Edtmair. We prove a general structural result for Legendrians $L$ which are \emph{eventually equivariantly essential} (E3), in the sense that the $k$th Gutt--Hutchings capacity $c_{k}(D^{*}TL)$ is infinite for $k$ large enough. We show that any E3 Legendrian in the boundary of a Liouville domain $\Omega$ bounds a chord of length at most $\liminf c_{k}(\Omega)/k$.
\end{abstract}
\maketitle

\section{Introduction}
\label{sec:introduction}

The \emph{strong Arnol'd conjecture} is said to hold for a Liouville domain $(\Omega,\lambda)$ provided each Legendrian $L\subset \partial \Omega$ bounds a Reeb chord with distinct endpoints. Here we recall the characteristic foliation of $\partial \Omega$ is spanned by a vector field $R$ satisfying $\lambda(R)=1$, and this vector field is the Reeb flow we consider throughout this text. In this paper we prove the following:
\begin{theorem}\label{theorem:main-R4}
  If $\Omega\subset \R^{4}$ is a uniformly convex domain, then every Legendrian $L\subset \partial \Omega$ bounds a Reeb chord whose length is strictly smaller than the minimal positive period of a Reeb orbit. In particular this chord must have distinct endpoints, and so $\Omega$ solves the strong Arnol'd chord conjecture.
\end{theorem}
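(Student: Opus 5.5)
The plan follows the structure sketched in the abstract. There are two inputs to combine: (i) a general result saying that an \emph{eventually equivariantly essential} (E3) Legendrian $L\subset\partial\Omega$ bounds a Reeb chord of length at most $\liminf_{k} c_{k}(\Omega)/k$, where $c_k$ is the $k$th Gutt--Hutchings capacity; and (ii) an estimate, specific to uniformly convex $\Omega\subset\R^{4}$, showing that $\liminf_k c_k(\Omega)/k$ is strictly smaller than the minimal period $T_{\min}$ of a closed Reeb orbit on $\partial\Omega$. The last assertion of the theorem is then immediate: a chord from $L$ to itself with coincident endpoints, of length $\ell$, would close up to a closed Reeb orbit. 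Hence $\ell\ge T_{\min}$, so a chord with $\ell<T_{\min}$ has distinct endpoints.

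For (i), I first show that every Legendrian $L$ in the boundary of a four-dimensional domain is E3. Here $L$ is a closed surface, and it is orientable since Legendrians in a contact $3$-manifold have trivial normal bundle identified with $TL$. A Weinstein neighbourhood identifies a neighbourhood of $L$ with a neighbourhood in the $1$-jet space, and hence a small piece of $D^{*}TL$ (stabilized) embeds near the boundary. For surfaces I would check that $c_k(D^{*}TL)=\infty$ for large $k$. The reason is that $S^1$-equivariant symplectic cohomology of $T^{*}L$ is $S^1$-equivariant loop space homology, and for the relevant generators the $U$-map tower does not terminate. I expect this to follow from Viterbo's isomorphism and the fact that the constant-loop component contributes $H_*(L)\otimes H_*(BS^1)$ without being killed.

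Given E3, the chord bound is a neck-stretching/capacity argument. If $L$ bounded no chord of length $\le a$, one could displace or thicken a standing neighbourhood of $L$ to produce a symplectic embedding contradicting $c_k(\Omega)\ge$ (something infinite-like) up to action $ka$. Concretely, monotonicity of $c_k$ together with the action filtration forces a chord of length $\le c_k(\Omega)/k$. I would take this from the general structural theorem the paper proves.

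For (ii), following Kang--Zhang and Edtmair: uniform convexity makes the Reeb flow dynamically convex, and Edtmair's construction gives a symplectic embedding of $\Omega$ into an ellipsoid $E(a,b)$, or of an ellipsoid into $\Omega$ in the reverse direction as needed. The ellipsoid has $a=T_{\min}$ and $b$ comparable to the second action. Monotonicity then yields $c_k(\Omega)\le c_k(E(a,b))$. Since $c_k(E(a,b))$ is the $k$th term of the merged sequence of multiples of $a$ and $b$, its growth rate is $ab/(a+b)<a=T_{\min}$ whenever $b<\infty$. The strict inequality therefore reduces to Edtmair's ellipsoid having finite second parameter. I expect this to be the main obstacle: producing, from uniform convexity alone, an embedding into a genuinely finite ellipsoid $E(T_{\min},b)$, or equivalently an asymptotic bound $c_k(\Omega)\le kT_{\min}-\epsilon k$. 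I would first try to control the growth through the ECH/Gutt--Hutchings volume asymptotics, $c_k\sim\sqrt{\ldots k\,\mathrm{vol}}$ for ECH, combined with Edtmair's local model near the shortest orbit.
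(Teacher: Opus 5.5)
Your architecture is the paper's: E3 Legendrians bound chords of length at most $\liminf_k c_k(\Omega)/k$, then put $\Omega$ inside a finite ellipsoid $E(c,cN)$ with $c$ the systole, then use $\liminf_k c_k(E(c,cN))/k=cN/(1+N)<c$. However, two steps have genuine gaps.

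\textbf{The E3 step is in the wrong dimension.} $\partial\Omega$ is $3$-dimensional, so a Legendrian $L\subset\partial\Omega$ is $1$-dimensional: a disjoint union of circles, not a closed surface. This is not cosmetic. The statement you set out to prove, that closed surfaces are E3, is false: $S^{2}$ is not E3 (see the discussion after Example~\ref{example:CPn-1}). The mechanism you invoke, that the constant-loop part $H_{*}(L)\otimes H_{*}(BS^{1})$ is ``not killed,'' is exactly what fails for non-aspherical $L$. The correct input is asphericity. For aspherical $L$, the inclusion of constant loops is an isomorphism onto the contractible-loop summand of $H_{*}(\Lambda L)$. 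Through Abouzaid's version of Viterbo's isomorphism, $\mathit{PSS}$ is then onto that summand, which forces $\mathit{PSS}(1)\neq 0$ in equivariant symplectic homology and hence $c_{k}(DT^{*}L)=\infty$ (Lemma~\ref{lemma:aspherical}). Circles qualify. Quoting Theorem~\ref{theorem:main-tech} as a black box for the chord bound is fine. Note, though, that your ``displace or thicken'' sketch is not its mechanism. The actual mechanism is the exact embedding $D_{\epsilon}T^{*}L\times D(a)\hookrightarrow\Omega$ when $L$ bounds no chord of length $\le a$, plus monotonicity, plus the estimate $\liminf_k c_{k}(K)/k\ge a$ for a subdomain $K\subset D_{\epsilon}T^{*}L\times D(a)$.

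\textbf{The finite-ellipsoid embedding is left unproved.} You correctly flag as the crux an embedding of $\Omega$ into a \emph{finite} ellipsoid $E(c,b)$ with $c$ the systole, but you do not supply it, and your fallback cannot. ECH volume asymptotics concern capacities growing like $\sqrt{k}$ and say nothing about the linear growth rate of Gutt--Hutchings capacities. Embedding an ellipsoid into $\Omega$ only gives lower bounds, so it is no help either. The cited results give only $\Omega\hookrightarrow E(c,\infty)$, for which $\liminf_k c_k/k=c$, with no strict inequality.

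You also need the systole itself, not merely some orbit, to bind a $\partial$-strong disk-like global surface of section. The paper gets this from Abbondandolo--Edtmair--Kang, Hryniewicz, and Florio--Hryniewicz, not from dynamical convexity alone.

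The missing idea is a direct inspection of Edtmair's construction. It places $\phi(\Omega)$ in a domain $A(c,H)\subset E(c,\infty)$ with $H>0$ on the interior of $D(c)$ and $H=N'(c-\pi\abs{z}^{2})$ near $\partial D(c)$. The domain $A(c,H)$ is monotone in $H$, and $A(c,N(c-\pi\abs{z}^{2}))=E(c,cN)$. By compactness away from $\partial D(c)$, $H\le N(c-\pi\abs{z}^{2})$ everywhere once $N\gg N'$. Hence $\phi(\Omega)\subset E(c,cN)$. Here $N$ is just a large constant, not tied to any ``second action.''
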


In \cite{kang-jtopol-2026} this result is proved, in $\R^{2n}$ for $2n\ge 4$, under a non-degeneracy assumption on the Reeb flow (the assumption is that there is a Morse-Bott component in the space of systoles). Our proof has a different flavour, and we obtain the result (only in $\R^{4}$) without any assumptions on $\Omega$ besides its uniform convexity.

The engine behind our upper bound is the sequence of \emph{Gutt--Hutchings capacities} $c_{k}(\Omega)$ defined in \cite{gutt-hutchings-AGT-2018} (see Definition \ref{definition:GH}). The paper \cite{kang-zhang-arXiv-2024} already uses the relationship between Gutt--Hutchings capacities and the Lagrangian capacity of \cite{cieliebak-mohnke-inventiones-2018} established in \cite{pereira-thesis-2022,pereira-JSG-2025} to prove some cases of the strong Arnol'd chord conjecture.\footnote{We also refer the reader to \cite{viterbo-CRASP-1990} for earlier work relating the Ekeland--Hofer capacities with a version of the Lagrangian capacity.} This paper is a mild variation of the idea in \cite{kang-zhang-arXiv-2024}. Our approach differs in its execution in a few places. Firstly, it does not rely on the results of \cite{pereira-thesis-2022,pereira-JSG-2025}, and, secondly, it does not use the Lagrangian capacity; rather, it uses the idea of \cite[\S2.1]{brocic-cant-arXiv-2026} based on Viterbo restriction and exact embeddings $\iota:D_{\epsilon}T^{*}L\times D(a)\to \Omega$.

\begin{remark}
  While this note was being prepared, \cite{faisal-li-yin-2026} was posted, and it proves similar results to those of \cite{pereira-thesis-2022,pereira-JSG-2025}. The paper \cite{faisal-li-yin-2026} uses the ``de Rham string topology'' approach of \cite{fukaya-NATO-2006,irie-jtopol-2020,li-yin-arXiv-2023,li-shuhao-arXiv-2026}, and bounds the minimal area of $J$-holomorphic disk on a closed spin aspherical Lagrangian in terms of the Gutt--Hutchings capacities.

  Unlike \cite{faisal-li-yin-2026}, the results we state below do not involve any a priori spin or orientability hypotheses on the Legendrians.
\end{remark}

To state our next result, we introduce the following class of manifolds:
\begin{bigdefinition}
  A compact manifold $L$ is said to be \emph{eventually equivariantly essential (E3)} provided that $c_{k}(DT^{*}L)=\infty$ for $k$ sufficiently large.
\end{bigdefinition}
\begin{biglemma}\label{lemma:aspherical}
  All compact aspherical manifolds are E3.
\end{biglemma}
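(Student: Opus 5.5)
We plan to show that for a compact aspherical $L$, the Gutt--Hutchings capacities $c_k(DT^*L)$ are infinite as soon as $k > \dim L$; in fact they should be infinite for every $k$ beyond a threshold set by the cohomological dimension of $L$. Recall that $c_k$ is defined via $S^1$-equivariant symplectic cohomology. Concretely, $c_k(\Omega)$ is the infimum of action levels $a$ such that the map $\delta\circ\iota_a : SH^{S^1,<a}(\Omega)\to H^*(\Omega)\otimes \Q[u^{-1}]\to H_*(\Omega)$ hits $[\mathrm{pt}]\otimes u^{-k+1}$ (with the appropriate grading conventions of Definition \ref{definition:GH}). So $c_k=\infty$ means that the class $u^{-k+1}[\mathrm{pt}]$ is never in the image. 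For a cotangent bundle, this reduces to a statement in equivariant string topology: via Viterbo's isomorphism, $SH^{S^1}_*(DT^*L)\cong H^{S^1}_{*}(\Lambda L)$ (with local system twisting, which we avoid by working with $\Z/2$ or noting the argument is insensitive to it). The map $\delta$ then corresponds to the map $H^{S^1}_*(\Lambda L,L)\to H_{*-1}^{S^1}(L)=H_*(L)\otimes H_*(BS^1)$ from the long exact sequence of the pair of constant loops $L\subset\Lambda L$.

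The key step is the following. For aspherical $L$, each component $\Lambda_\gamma L$ of the free loop space indexed by a conjugacy class $[\gamma]$ is homotopy equivalent to $B Z(\gamma)$, where $Z(\gamma)$ is the centralizer of $\gamma$ in $\pi_1 L$. The contractible component is $\Lambda_0 L\simeq L$, via the constant loops, and the $S^1$-action on it is homotopically trivial in a suitable sense: $\Lambda_0L \simeq L^{S^1}$-equivariantly up to homotopy, so $(\Lambda_0 L)_{hS^1}\simeq L\times BS^1$. Therefore the map $H^{S^1}_*(L)\to H^{S^1}_*(\Lambda_0 L)$ is an isomorphism, and the relative equivariant homology of $(\Lambda_0 L, L)$ vanishes. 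The noncontractible components do not contribute to the boundary map into constant loops, since $\delta$ respects the splitting by free homotopy class and only the trivial class maps to $L$. Hence $\delta$ is zero on all of $SH^{S^1}$ in the relevant range, and the point class times $u^{-k+1}$ is never hit. This gives $c_k=\infty$ for all $k$, which is more than enough. Some care is needed about the "positive" part versus the full equivariant SH, and about where exactly $\delta$ lands.

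The main obstacle is making the comparison between the Floer-theoretic definition of $c_k$ and the topological loop-space model precise enough to transport the vanishing. In particular, I would need to check that filtration by action is compatible with the energy filtration on $\Lambda_0 L$, and that constant orbits correspond exactly to the inclusion $L\subset\Lambda_0L$ at the level of the $S^1$-equivariant long exact sequence. If the vanishing for all $k$ fails because of low-degree contributions (e.g. the class $[\mathrm{pt}]u^0$ might be handled differently), I would settle for $k>\dim L+1$. Here one uses that $H^{S^1}_*(\Lambda_0L,L)$ vanishes and that noncontractible components $BZ(\gamma)$ have homology only in degrees at most $\dim L$ (the cohomological dimension of subgroups of $\pi_1L$ is at most $\dim L$). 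Thus the equivariant classes in $H_*(BZ(\gamma)_{hS^1})$ cannot reach the degree of $u^{-k+1}[\mathrm{pt}]$ for large $k$. This degree-counting fallback is what I expect to actually carry the proof, and it explains the "eventually" in the definition.
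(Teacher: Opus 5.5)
Your main argument is correct and uses the same mechanism as the paper. You compare $\mathit{SH}$ of $T^{*}L$ with loop-space homology, and use that for aspherical $L$ the inclusion of constant loops $L\to\Lambda_{0}L$ is a homotopy equivalence. This forces $\delta=0$, equivalently injectivity of the equivariant $\mathit{PSS}$, and hence $c_{k}(DT^{*}L)=\infty$ for every $k$, already for $k=1$.

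Where you differ is in how equivariance enters. You posit an $S^{1}$-equivariant Viterbo isomorphism that intertwines the tautological triangle with the long exact sequence of the pair $(\Lambda L,L)$ in Borel homology, and then compute $(\Lambda_{0}L)_{hS^{1}}\simeq L\times BS^{1}$. The clean justification for that step is that $L\to\Lambda_{0}L$ is $S^{1}$-equivariant (trivial action on $L$) and a homotopy equivalence. It is therefore a map of fibrations over $BS^{1}$ that is an equivalence on fibres, hence an equivalence of Borel constructions.

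The paper instead uses only Abouzaid's non-equivariant diagram together with Latschev's lemma, so the non-equivariant $\mathit{PSS}$ is an isomorphism onto the contractible summand. It then invokes the argument of Bro\'ci\'c--Cant to get $\mathit{PSS}(1)\neq 0$ in $\mathit{SH}^{\mathit{eq}}$, i.e.\ $c_{1}=\infty$, whence $c_{k}\ge c_{1}=\infty$.

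Your version is more transparent and gives injectivity on all of $\mathit{HM}\otimes\Q[u^{-1}]$ at once. But the equivariant comparison you need is a heavier and less documented input than the paper's. It must be compatible with constant loops and, for non-spin $L$, with an $S^{1}$-equivariant twisting local system. This is exactly the ``main obstacle'' you flag. You can bypass it:
\begin{itemize}
\item the non-equivariant comparison kills the contractible part of the non-equivariant $\mathit{SH}^{+}$;
\item filtering $\mathit{CF}^{\mathit{eq}}$ by powers of $u^{-1}$ then kills the contractible part of $\mathit{SH}^{\mathit{eq},+}$, since the graded pieces are copies of the non-equivariant complex with differential $d_{0}$;
\item hence $\delta=0$.
\end{itemize}
Also, no compatibility between action and energy filtrations is needed. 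The condition $c_{k}=\infty$ only requires the relevant class to be nonzero in the unfiltered $\mathit{SH}^{\mathit{eq}}=\varinjlim_{c}\mathit{SH}^{\mathit{eq}}_{c}$.

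You should drop the degree-counting fallback. It is unnecessary: $\delta$ preserves free homotopy classes, so the non-contractible summands never reach the constant loops, whatever their degrees. As stated it also does not work. A bound on the homology of $\Lambda_{\gamma}L\simeq BZ(\gamma)$ does not by itself bound its $S^{1}$-equivariant homology; a point with the trivial action already has unbounded Borel homology. Nor does it ``explain the eventually'' in E3. For aspherical $L$ one gets $c_{1}=\infty$, and whether E3 is genuinely weaker than $c_{1}=\infty$ is left open in the paper.

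Two minor points:
\begin{itemize}
\item The class to be hit in the Gutt--Hutchings definition is the unit $1\in\mathit{HM}(W,\Q)$, a fundamental class, not $[\mathrm{pt}]$. This does not affect your main argument, since you prove $\delta=0$.
\item Passing to $\Z/2$ coefficients would change the capacities, which are defined over $\Q$. Your other remedy is the right one: the twisting is harmless because any local system pulls back along the equivalence $L\times BS^{1}\to(\Lambda_{0}L)_{hS^{1}}$.
\end{itemize}
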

\begin{remark}\label{remark:essential}
  \cite[Corollary 1.1.6, Lemma 4.2.5]{zhao-thesis-2016} and Definition \ref{definition:regular-borel-data} imply that spin E3 manifolds are rationally essential.
\end{remark}

\begin{theorem}\label{theorem:main-tech}
  Let $\Omega$ be a Liouville domain. If $L\subset \partial \Omega$ is an E3 Legendrian submanifold, then $L$ bounds a Reeb chord whose length is at most the limit infimum of $c_{k}(\Omega)/k$ as $k\to\infty$, provided this limit infimum is finite.
\end{theorem}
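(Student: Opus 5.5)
We argue by contradiction, following the Viterbo-restriction idea of \cite[\S2.1]{brocic-cant-arXiv-2026}. Suppose $L\subset\partial\Omega$ is an E3 Legendrian which bounds no Reeb chord of length $\le a_{0}$, where $a_{0}$ is slightly larger than $\liminf_{k} c_{k}(\Omega)/k$. The plan is to build an exact (Liouville) embedding
\begin{equation*}
\iota: D_{\epsilon}T^{*}L\times D(a)\to \Omega
\end{equation*}
for every $a<a_{0}$ and some small $\epsilon>0$. Here $D(a)$ is the disk of area $a$. We then compare Gutt--Hutchings capacities.

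\textbf{Step 1: the embedding.} A Weinstein neighbourhood of $L$ in $\partial\Omega$ is contactomorphic to a neighbourhood of the zero section in $J^{1}L=T^{*}L\times\R$. Flowing this neighbourhood by the Reeb flow for times in $[0,a)$ gives a map $D_{\epsilon}T^{*}L\times[0,a)\to\partial\Omega$. The absence of chords of length $\le a_{0}$ makes this map injective, after shrinking $\epsilon$ and using compactness. We then thicken it in the Liouville (collar) direction $[1-\delta,1]\times\partial\Omega\subset\Omega$. The coordinates $(t,r)$, with $t$ the Reeb time and $r$ the collar coordinate, have symplectic form $dr\wedge dt$ plus the $T^{*}L$ part. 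The region $\{t\in[0,a),\, r\in[1-\delta,1]\}$ therefore has area about $\delta a$, which is too small. To fix this, we use the full collar down to the skeleton, or equivalently rescale, and identify a region $\{0\le t\le a/r\}$ in symplectization coordinates with a disk of area approaching $a$. The resulting embedding is exact up to a correction supported near $L$, which we absorb since $\lambda|_{L}=0$. A precise bookkeeping gives an exact embedding with $D(a')$ for any $a'<a$.

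\textbf{Step 2: capacities.} Monotonicity of $c_{k}$ under exact (generalized Liouville) embeddings gives $c_{k}(D_{\epsilon}T^{*}L\times D(a))\le c_{k}(\Omega)$. For the product we need a lower bound of the shape
\begin{equation*}
c_{k}(D_{\epsilon}T^{*}L\times D(a))\ \ge\ (k-k_{0})\,a ,
\end{equation*}
where $k_{0}$ is the index beyond which $c_{k}(DT^{*}L)=\infty$ (this is where the E3 hypothesis enters). I would get this from the Künneth/product behaviour of $S^{1}$-equivariant symplectic homology. Heuristically, $c_{k}$ of a product is $\min_{i+j=k}$ of $c_{i}(X)+c_{j}(Y)$. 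Since $c_{i}(D_{\epsilon}T^{*}L)=\infty$ for $i\ge k_{0}$, only $i<k_{0}$ contribute, forcing $j\ge k-k_{0}$, and $c_{j}(D(a))=ja$. Only the inequality ``$\ge$'' is needed. I would prove it directly via Viterbo transfer to the product and a spectral-sequence or filtration argument showing that the $U$-map tower must pass through high powers on the disk factor.

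\textbf{Conclusion and main obstacle.} Combining the two steps gives $(k-k_{0})a\le c_{k}(\Omega)$. Dividing by $k$ and taking $\liminf$ yields $a\le\liminf c_{k}(\Omega)/k$ for all $a<a_{0}$. Choosing $a_{0}$ strictly above the liminf gives the contradiction, so a chord of length at most the liminf exists. A limiting argument via Arzelà--Ascoli on chords handles the non-strict bound. I expect the product lower bound in Step 2 to be the main obstacle. Gutt--Hutchings capacities are not known to satisfy a clean product formula in general, so one must carefully construct the relevant equivariant Floer data (the regular Borel data referred to in the paper) and show that finiteness of $c_{k}$ for the product would produce a finite-action class in $DT^{*}L$ of index $\ge k_{0}$.
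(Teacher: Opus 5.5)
Your proposal follows the same route as the paper: contradiction plus compactness to get some $a>\liminf c_k(\Omega)/k$ with no chords of length $\le a$, the exact embedding $D_\epsilon T^*L\times D(a)\to\Omega$ (which the paper does not construct but cites as \cite[Theorem 7]{brocic-cant-arXiv-2026}), monotonicity of $c_k$, and a lower bound of the form $c_{k+i_0+1}(K)>ka$ that comes from the E3 hypothesis. The product bound you flag as the main obstacle is exactly the paper's Theorem \ref{theorem:cartesian-product}. The paper states it for a smooth Liouville subdomain $K\subset D_\epsilon T^*L\times D(a)$, which avoids the corners of the product. It proves it with a chain-level K\"unneth isomorphism and the fact that raising the disk slope from below $a$ into $(ka,(k+1)a)$ makes the continuation map equal to $u^{k}$ times a quasi-isomorphism.
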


If $\Omega$ is $4$-dimensional, then $L$ is $1$-dimensional, so $L$ must be a disjoint union of circles, and hence $L$ is E3 by Lemma \ref{lemma:aspherical}. This is what enables us to prove an unconditional statement in Theorem \ref{theorem:main-R4}.

In higher dimensions, there are examples of Legendrians which fail the conclusion of Theorem \ref{theorem:main-tech}, and thus are not E3. For instance:
\begin{example}\label{example:CPn-1}
  Any Lagrangian in $\bar{L}\subset \C P^{n-1}=\partial B(1)/S^{1}$ whose rationality constant $\rho(\bar{L})$ equals $1/d$ admits a $d$-fold Legendrian cover $L\subset \partial B(1)$ whose minimal chord length is $1/d$. In particular if $d<n$, this lift $L$ is not E3.
\end{example}
This shows $S^{n-1}$ is not E3, although this also follows from Remark \ref{remark:essential}. The example also implies:
\begin{corollary}[{\cite[Theorem 1.1]{cieliebak-mohnke-inventiones-2018}}]
  Any aspherical Lagrangian $\bar{L}$ in $\C P^{n-1}$ has rationality constant $\rho(\bar{L})$ at most $1/n$.
\end{corollary}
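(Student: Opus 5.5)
We argue by contradiction, combining Lemma \ref{lemma:aspherical}, Theorem \ref{theorem:main-tech} and Example \ref{example:CPn-1}. Suppose $\bar{L}\subset \C P^{n-1}$ is aspherical with rationality constant $\rho(\bar{L})=1/d$ and $d<n$.

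\textbf{Step 1: a contradiction from the Legendrian lift.} Example \ref{example:CPn-1} gives a $d$-fold Legendrian cover $L\subset \partial B(1)$ of $\bar{L}$ whose minimal chord length is $1/d$. Since $L\to\bar{L}$ is a finite covering, $L$ is a compact aspherical manifold (its universal cover is that of $\bar{L}$), so $L$ is E3 by Lemma \ref{lemma:aspherical}. We normalize so that $\partial B(1)$ has Reeb period $1$ and $c_{k}(B(1))=\lceil k/n\rceil$; this is the normalization in which Example \ref{example:CPn-1} gives chord length $1/d$. Then
\begin{equation*}
\liminf_{k\to\infty} c_{k}(B(1))/k = 1/n,
\end{equation*}
which is finite. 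Theorem \ref{theorem:main-tech} therefore produces a Reeb chord of $L$ of length at most $1/n<1/d$. This contradicts the minimal chord length $1/d$, so $d\ge n$ and $\rho(\bar{L})=1/d\le 1/n$. This is exactly the content of the phrase ``if $d<n$, this lift $L$ is not E3'' in the example.

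\textbf{Step 2: the general case.} The argument above assumes $\rho(\bar{L})$ has the form $1/d$. For a general monotone or rational Lagrangian we would write $\rho(\bar{L})=p/q$ and lift $\bar{L}$ to a $q$-fold Legendrian cover. Its minimal chord length is a multiple of $1/q$ that is at least $\rho$, by relating chords to disks: a Reeb chord of $L$ projects to a loop in $\bar{L}$ capped by a disk of area equal to the chord length. The same comparison with $1/n$ then gives $\rho\le 1/n$. If $\bar{L}$ is not rational, the set of areas of disks with boundary on $\bar{L}$ is dense. We would handle this either through the convention $\rho=0$ or by working directly with Theorem \ref{theorem:main-tech}.

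\textbf{Main obstacle.} The delicate point is the chord-to-disk correspondence in Step 2, in particular showing that the projected loop bounds a disk whose area equals the chord length. In the intended reading, however, the rationality-constant case $1/d$ is all that is needed, and Step 1 already gives the corollary.
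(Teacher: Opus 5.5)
Your Step 1 is the paper's argument. You lift $\bar L$ via Example~\ref{example:CPn-1}, observe that the lift is aspherical and hence E3 by Lemma~\ref{lemma:aspherical}, and apply Theorem~\ref{theorem:main-tech} with $\liminf c_k(B(1))/k=1/n$. Your explicit remark that a finite cover of an aspherical manifold is aspherical is a step the paper leaves implicit. The gap is the reduction to $\rho(\bar L)=1/d$. You assume this form outright. Step 2, meant to cover $\rho=p/q$, rests on a chord-to-disk correspondence you never establish and yourself flag as the main obstacle. As written, the proof covers only the case you assumed.

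The missing observation is the first sentence of the paper's proof: $\bar L$ bounds a disk of symplectic area $1$. Concretely, the generator of $\pi_2(\C P^{n-1})$ is a line of area $1$, and it maps to a class in $\pi_2(\C P^{n-1},\bar L)$ of the same area. So if $\bar L$ is rational, $1\in\omega(\pi_2(\C P^{n-1},\bar L))=\rho\Z$, which forces $\rho=1/d$ for a positive integer $d$. A value $p/q$ in lowest terms with $p>1$ never occurs. If $\bar L$ is not rational, then $\rho=0$ and there is nothing to prove. With this line inserted, you should delete Step 2 and the chord-to-disk discussion entirely, and Step 1 becomes a complete proof matching the paper's.
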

\begin{proof}
  If $\rho(\bar{L})\ne 0$, then $\rho(\bar{L})=1/d$ for some $d$, since $\bar{L}$ bounds some disk with symplectic area $1$. Example \ref{example:CPn-1} applies, and we conclude a $d$-fold Legendrian lift $L\to \bar{L}$. Via Lemma \ref{lemma:aspherical} we conclude that $L$ is E3. Theorem \ref{theorem:main-tech} applies to $L\subset \partial B(1)$, implying $d\ge n$, as desired.
\end{proof}

Returning to the statement of Theorem \ref{theorem:main-R4}, we can be a bit more precise about the ratio of the shortest chord length divided by length of the systole. We digress for a moment and explain this. The key input is the following consequence of the results of \cite{edtmair-gafa-2024,abbondandolo-edtmair-kang-arXiv-2024,florio-hryniewicz-ASENS-2025}:
\begin{biglemma}\label{lemma:ellipsoid-flex}
  If $\Omega$ is a uniformly convex domain in $\R^{4}$, then there exists a symplectomorphism $\phi$ of $\R^{4}$ such that $\phi(\Omega)$ is contained in an ellipsoid $E(c,cN)$ where $c$ is the length of a systole of $\Omega$ and $N>0$ is a large (but finite) number.
\end{biglemma}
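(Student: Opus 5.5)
The plan is to combine three published inputs: the dynamical convexity / Birkhoff section theory of \cite{florio-hryniewicz-ASENS-2025}, the ellipsoid embedding construction of \cite{edtmair-gafa-2024}, and its refinement in \cite{abbondandolo-edtmair-kang-arXiv-2024}.

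\textbf{Step 1: a disk-like global surface of section.} A uniformly convex domain $\Omega\subset\R^{4}$ is dynamically convex, so its boundary carries a disk-like global surface of section. For our purposes we need more: the binding should be a systole $\gamma$ of $\Omega$, with action $c$. By \cite{florio-hryniewicz-ASENS-2025}, which improves the Hofer--Wysocki--Zehnder theory and removes non-degeneracy hypotheses, the Reeb flow on $\partial\Omega$ admits a disk-like global surface of section bounded by a systole. Its return map $\psi$ is then an area-preserving map of the open disk of total area $c$, with return time function $\tau$.

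\textbf{Step 2: embedding into an ellipsoid.} Edtmair's construction \cite{edtmair-gafa-2024} turns such a surface of section into a symplectic embedding of $\Omega$ into $E(c,cN)$. It first realizes $\Omega$, up to symplectomorphism, as a domain swept out by the suspension of $\psi$ over a disk of area $c$. It then embeds that domain into a domain of the form $\{(z_1,z_2) : \pi\abs{z_1}^2 < c,\ \text{second factor controlled by } \tau\}$, which sits inside $E(c,cN)$ once $N$ dominates the return time, i.e.\ once $N>\sup\tau/c$ (appropriately normalized).

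The key requirement is that $\tau$ be bounded near the binding, so that $N$ is finite. Uniform convexity gives a nondegenerate linearized flow transverse to the binding, with rotation number greater than $1$. The return time then stays bounded as one approaches $\gamma$, and $\psi$ extends continuously to the closed disk. This is exactly the regime treated in \cite{abbondandolo-edtmair-kang-arXiv-2024}, where such embeddings $\phi(\Omega)\subset E(c,cN)$ are produced. The symplectomorphism of the image can be extended to all of $\R^{4}$ because the image is star-shaped, so one may apply an isotopy argument (extension after a Hamiltonian isotopy of star-shaped domains).

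\textbf{Main obstacle.} The hardest step is the uniform bound on $\tau$ together with the matching of the $z_1$-direction area exactly to $c$, rather than $c+\epsilon$. If exactness fails, I would first settle for $E(c+\epsilon,(c+\epsilon)N)$ for every $\epsilon>0$. That version suffices downstream, because the application only uses capacities of ellipsoids, which are continuous in $\epsilon$.
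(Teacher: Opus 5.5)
Your architecture matches the paper's: the systole bounds a $\partial$-strong disk-like global surface of section, Edtmair's construction applies, and the image lands in a finite ellipsoid. But both load-bearing steps rest on citations that do not contain what you attribute to them.

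\textbf{Step~1.} \cite{florio-hryniewicz-ASENS-2025} takes a Hopf orbit (unknotted, self-linking number $-1$) of a dynamically convex flow and makes it the binding of a $\partial$-strong disk-like global surface of section. It does not show that the systole is such an orbit. The missing ingredient is the theorem of \cite{abbondandolo-edtmair-kang-arXiv-2024}, combined with \cite[Theorem 1.7]{hryniewicz-JSG-2014}, that a systole of a convex domain in $\R^{4}$ is a Hopf orbit. This matters: a binding of action $c'>c$ only gives $E(c',c'N)$, and then $c'N/(1+N)<c$ may fail.

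\textbf{Step~2.} Your decisive sentence, that \cite{abbondandolo-edtmair-kang-arXiv-2024} produces embeddings with $\phi(\Omega)\subset E(c,cN)$, is not correct. Their Corollary~1 only gives an embedding into the cylinder $E(c,\infty)$. The finite-ellipsoid bound is exactly what has to be extracted by inspecting \cite{edtmair-gafa-2024}, and the argument runs as follows.
\begin{enumerate}
\item Edtmair's embedding takes values in a domain $A(c,H)$, where $H:\R/\Z\times D(c)\to\R$ is smooth, positive on the interior, and equal to $N'(c-\pi\abs{z}^{2})$ near $\partial D(c)$.
\item The assignment $H\mapsto A(c,H)$ is monotone.
\item $A(c,G)=E(c,cN)$ for $G=N(c-\pi\abs{z}^{2})$.
\item Since $H$ and $G$ both vanish on $\partial D(c)$, the inequality $H\le G$ near the boundary comes from the normal form there (take $N>N'$). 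Away from the boundary it comes from compactness.
\end{enumerate}
The target is controlled by the generating Hamiltonian $H$, not by the return time $\tau$. For $E(c,cN)$ itself, $\tau\equiv cN$ while $H$ ranges over $[0,cN]$. Your criterion $N>\sup\tau/c$ does imply $H\le G$ when $H$ is radial and autonomous (the toric case). You give no argument for general time-dependent $H$, and none is needed once you use the boundary normal form.

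\textbf{Two smaller points.}
\begin{itemize}
\item Your fallback to $E(c+\epsilon,(c+\epsilon)N)$ does not suffice downstream unless $N$ is independent of $\epsilon$. You need $(c+\epsilon)N/(1+N)<c$, i.e.\ $\epsilon<c/N$, which can fail if $N=N_{\epsilon}\to\infty$.
\item Extending to a symplectomorphism of $\R^{4}$ uses that the \emph{domain} $\Omega$ is star-shaped, not its image. One applies the Alexander trick $x\mapsto t^{-1}\phi(tx)$ and then extends the resulting Hamiltonian isotopy. For the application this is unnecessary anyway, since $H^{1}(\Omega)=0$ makes any symplectic embedding of $\Omega$ exact.
\end{itemize}
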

The statement is not literally in the cited papers, but it follows from the construction of \cite{edtmair-gafa-2024}, as we will explain in \S\ref{sec:proof-flex}.

The relevance of this result is that the Gutt--Hutchings capacities for an ellipsoid are well-known:
\begin{bigproposition}[{\cite[Example 1.8]{gutt-hutchings-AGT-2018} and \cite[\S4]{cieliebak-hofer-latschev-schlenk}}]\label{prop:GH-ellipsoid}
  The $k$th Gutt--Hutchings capacity of $E(a_{1},\dots,a_{n})$ is equal to the $k$th largest number in the list: $$a_{1},\dots,a_{n},2a_{1},\dots,2a_{n},3a_{1},\dots,3a_{n},\dots$$ with repetitions. In particular:
  \begin{equation*}
    \liminf_{k\to\infty} \frac{c_{k}(E(a_{1},\dots,a_{n}))}{k}=\frac{1}{1/a_{1}+\dots+1/a_{n}}.
  \end{equation*}
  For $E(c,cN)$, this simplifies to $cN/(1+N)$. \hfill$\square$
\end{bigproposition}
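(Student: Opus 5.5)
The statement to prove is Proposition \ref{prop:GH-ellipsoid}; the formula for $c_k$ is quoted from \cite{gutt-hutchings-AGT-2018}, so the real content is the asymptotic formula. (Note that ``$k$th largest'' should be read as ``$k$th smallest'', i.e.\ the list sorted in nondecreasing order, since the list is unbounded.) I will take the combinatorial description as given and compute the liminf by a counting argument.

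For $T>0$, let $N(T)$ be the number of entries of the list, with multiplicity, that are $\le T$. Then
\begin{equation*}
N(T)=\sum_{i=1}^{n}\lfloor T/a_{i}\rfloor .
\end{equation*}
Since $x-1<\lfloor x\rfloor\le x$, this gives
\begin{equation*}
T\sigma-n< N(T)\le T\sigma,\qquad \sigma:=\sum_{i}1/a_{i}.
\end{equation*}
By definition of the sorted list, $c_{k}\le T$ if and only if $N(T)\ge k$.

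\emph{Upper bound.} Take $T=(k+n)/\sigma$. Then $N(T)>T\sigma-n=k$, so
\begin{equation*}
c_{k}\le (k+n)/\sigma .
\end{equation*}

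\emph{Lower bound.} Take any $T<k/\sigma$. Then $N(T)\le T\sigma<k$, so $c_{k}>T$. Letting $T\uparrow k/\sigma$ gives
\begin{equation*}
c_{k}\ge k/\sigma .
\end{equation*}

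Dividing both bounds by $k$ and letting $k\to\infty$ shows that $c_{k}/k\to 1/\sigma$. So the limit actually exists, and in particular the liminf equals $1/(1/a_{1}+\dots+1/a_{n})$. For $E(c,cN)$ we have $\sigma=1/c+1/(cN)=(N+1)/(cN)$, which gives $cN/(1+N)$.

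The only step needing care is the first one, the identification of $c_{k}(E)$ with the sorted list. If one wants to justify it rather than cite it, the plan is as follows. First perturb to a nondegenerate ellipsoid with rationally independent $a_{i}$. Its Reeb orbits are the iterates $m\gamma_{i}$, with action $ma_{i}$ and Conley--Zehnder indices strictly increasing along the sorted action list. The $S^{1}$-equivariant symplectic cohomology spectral sequence then degenerates for degree reasons. Consequently the class hitting $U^{k-1}$ of the unit is represented by the $k$th orbit in action order. The general case follows by continuity of $c_{k}$ under $C^{0}$-perturbation of the $a_{i}$. This grading argument is the main obstacle, and it is exactly what the cited references supply.
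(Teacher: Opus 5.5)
Your proposal is correct and takes the paper's route: the paper also just cites \cite[Example 1.8]{gutt-hutchings-AGT-2018} for the description of $c_{k}(E(a_{1},\dots,a_{n}))$ (where it is indeed the $k$th \emph{smallest} entry, as you note) and treats the asymptotic formula as immediate. Your bounds $k/\sigma\le c_{k}\le (k+n)/\sigma$ supply exactly the elementary counting step the paper leaves implicit, and they show slightly more than the stated liminf, namely that $c_{k}/k$ converges.
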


It then holds that $L\subset \partial \Omega$ bounds a chord of length at most $cN/(1+N)<c$, if $\Omega\subset E(c,cN)$ (as in Lemma \ref{lemma:ellipsoid-flex}), as we have:
\begin{equation*}
  \frac{c_{k}(\Omega)}{k}\le \frac{c_{k}(E(c,cN))}{k}.
\end{equation*}
This proves Theorem \ref{theorem:main-R4} modulo Lemma \ref{lemma:aspherical}, Theorem \ref{theorem:main-tech}, and Lemma \ref{lemma:ellipsoid-flex}.

To conclude this section, we note that the hypothesis of uniform convexity cannot be relaxed to convexity.
\begin{example}
  The polydisk $\Omega=D(1)\times D(2)$ contains the Legendrian:
  \begin{equation*}
    L=\set{(z,\bar{z}):\abs{z}=1}
  \end{equation*}
  in its boundary and $L$ has minimal length of a Reeb chord equal to the length of a systole of $\Omega$. Indeed, no Reeb chord with boundary on $L$ has distinct endpoints. This counterexample persists if one smooths the corner of $\Omega$ in a reasonable way, say, within the class of convex toric domains containing $\partial D(1)\times D(1)$ in their boundary.
\end{example}

\subsection{Outline of proof}
\label{sec:outline-proof}

From the introduction, it remains to prove Lemma \ref{lemma:aspherical}, Theorem \ref{theorem:main-tech}, and Lemma \ref{lemma:ellipsoid-flex}.

Lemma \ref{lemma:aspherical} will follow from the well-known comparison between string topology and symplectic homology (à la \cite{abouzaid-EMS-2015}), and the fact that the Gutt--Hutchings capacities are defined via $S^{1}$ symplectic homology. 

The first key idea used in the proof of Theorem \ref{theorem:main-tech} is the following:
\begin{bigproposition}[{\cite[Theorem 7]{brocic-cant-arXiv-2026}}]\label{proposition:BC-prop}
  If $L\subset \partial \Omega$ is a Legendrian which does not bound Reeb chords of length up to $a$, then there is an exact embedding of $D_{\epsilon}T^{*}L\times D(a)$ into the interior of $\Omega$, where $D(a)$ is the closed disk of area $a$, and $D_{\epsilon}T^{*}L$ is a small disk subbundle of $T^{*}L$.\hfill$\square$
\end{bigproposition}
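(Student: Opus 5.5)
The statement to prove is Proposition \ref{proposition:BC-prop}: if $L\subset\partial\Omega$ has no Reeb chords of length at most $a$, then $D_{\epsilon}T^{*}L\times D(a)$ embeds exactly into the interior of $\Omega$. The plan is to build the embedding directly from the Reeb flow. It has three steps: a neighborhood of $L$ in the boundary, a flow-out map, and a collar map into the interior.

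\emph{Step 1: neighborhood of $L$ in the boundary.} By the Legendrian neighborhood theorem, a neighborhood of $L$ in $\partial\Omega$ is strictly contactomorphic to a neighborhood of the zero section in the $1$-jet space $J^{1}L=T^{*}L\times\R$, with contact form $dz-p\,dq$. The Reeb vector field there is $\partial_{z}$.

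\emph{Step 2: flowing out by the Reeb flow.} Let $\varphi_{t}$ be the Reeb flow. I would take a slightly smaller $a'<a$, or use compactness to get chord-freeness up to $a+\delta$, so that there is room to spare. Define
\begin{equation*}
  \Phi:D_{\epsilon}T^{*}L\times[0,a]\to\partial\Omega,\qquad \Phi(q,p,t)=\varphi_{t}(q,p,0).
\end{equation*}
Because $L$ has no chords of length at most $a$ (including closed chords of length $0<t\le a$), a compactness argument gives injectivity for $\epsilon$ small. The map is an immersion since $\partial_{z}$ is transverse to $\{z=0\}$. It pulls back $\lambda$ to $dt-p\,dq$: the Reeb flow preserves $\lambda$, and on the slice $t=0$ the form restricts to $-p\,dq$.

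\emph{Step 3: collar and conversion to a disk.} Use the Liouville collar $(0,1]\times\partial\Omega\subset\Omega$, on which the form is $r\lambda$. Set
\begin{equation*}
  \Psi(q,p,t,r)=(r,\Phi(q,p,t)),\qquad r\in[1-\eta,1),
\end{equation*}
which lands in the interior. The pullback of the Liouville form is $r\,dt-r\,p\,dq$. Rescaling $p\mapsto p/r$ turns this into $r\,dt-p\,dq$, up to a small shrinking of $\epsilon$.

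The region $\{(t,r):t\in[0,a],\ r\in[1-\eta,1)\}$ with form $r\,dt$ has area $\eta a$, which is too small. To fix this, I would instead use the full symplectization range $r\in(0,1)$. This gives area nearly $a$, but the rectangle has an open side at $r\to 0$, which is degenerate. Two options:
\begin{itemize}
  \item allow $t\in[0,a+\delta]$, so the area is $(a+\delta)(1-\eta')$, which exceeds $a$ for small $\eta'$; or
  \item use Liouville rescaling, since $\Omega$ contains all $r\le 1$.
\end{itemize}
A closed disk of area $a$ then embeds in the rectangle $(0,a+\delta)\times(\eta',1)$, with a primitive differing from $r\,dt$ by an exact form. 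Correcting the Liouville form by $dF$ gives an exact embedding in the sense needed. Finally, the product $D_{\epsilon}T^{*}L\times D(a)$ is embedded because $\Psi$ is injective. The $\epsilon$-neighborhood must be uniform in $r$; this holds because the $p$-rescaling only shrinks it.

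\emph{Main obstacle.} The delicate point is the injectivity and uniformity in Step 2: extracting a uniform $\epsilon$ from the chord-free hypothesis and handling $t$ slightly beyond $a$. I would handle it by a compactness argument on the closed set of near-chords.
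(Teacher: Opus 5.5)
Your argument is correct. The Reeb flow-out of a strict Legendrian neighborhood over $t\in[0,a+\delta]$ is carried along the negative Liouville collar and reparametrized by the $r$-dependent fiber rescaling. This exhibits $D_{\eta'\epsilon}T^{*}L\times(0,a+\delta)\times(\eta',1)$ with Liouville form exactly $r\,dt-p\,dq$, and a closed disk of area $a$ embeds exactly into a rectangle of area $(a+\delta)(1-\eta')>a$. This is the standard flow-out construction; the paper itself gives no proof and simply cites \cite[Theorem 7]{brocic-cant-arXiv-2026}.

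One point to tidy: your two ``options'' are not alternatives. You need both the extra $\delta$ in $t$ and the full collar range $r\to 0$, which your final rectangle in fact uses.
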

The second fact we will need is the monotonicity of the Gutt--Hutchings capacities under exact embeddings:
\begin{bigproposition}[{\cite[Theorem 1.24]{gutt-hutchings-AGT-2018}}]\label{proposition:GH-mono-prop}
  If $\iota:K\to \Omega$ is an exact embedding taking values in the interior of $\Omega$, then $c_{k}(K)\le c_{k}(\Omega)$ holds for all $k$.\hfill$\square$
\end{bigproposition}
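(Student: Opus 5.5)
The plan is to follow the construction of \cite{gutt-hutchings-AGT-2018}. The capacity $c_{k}$ is defined through positive $S^{1}$-equivariant symplectic homology $SH^{S^{1},+}$, filtered by action, together with two maps: the $U$-map and the map $\delta: SH^{S^{1},+}\to H_{*}(X,\partial X)\otimes H_{*}(BS^{1})$. Concretely, $c_{k}(X)$ is the infimum of levels $T$ such that some class $\alpha$ in the action-truncated group $SH^{S^{1},+,T}(X)$ satisfies $\delta U^{k-1}\alpha=[X]\otimes[\mathrm{pt}]$. With this definition, monotonicity reduces to producing, for each $T$, a \emph{Viterbo transfer map}
\begin{equation*}
  \Phi^{T}:SH^{S^{1},+,T}(\Omega)\to SH^{S^{1},+,T}(K)
\end{equation*}
with three properties: it commutes with $U$, it is compatible with the truncation maps as $T$ increases, and it intertwines $\delta$ with the restriction map $H_{*}(\Omega,\partial\Omega)\to H_{*}(\iota(K),\partial \iota(K))$. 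That restriction map sends the fundamental class $[\Omega]$ to $[K]$. Granting these properties, if $\alpha$ realizes $c_{k}(\Omega)$ up to $\epsilon$, then $\Phi^{T}\alpha$ witnesses $c_{k}(K)\le T$, and letting $\epsilon\to 0$ gives the claim.

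\textbf{Step 1: reduce to a Liouville subdomain.} First I will replace $K$ by $\iota(K)$. Exactness means $\iota^{*}\lambda_{\Omega}-\lambda_{K}=df$. I will check that the capacities are unchanged when the Liouville form on $K$ is altered by an exact term, since actions of periodic orbits change by a boundary term only. Because $\iota(K)$ lies in the interior, there is room for a collar. Using the Liouville flow of $\Omega$ and the exactness, I will arrange that $\iota(K)$ is a Liouville subdomain whose boundary is transverse to the Liouville vector field of $\Omega$, possibly after a small deformation that changes capacities by an arbitrarily small amount.

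\textbf{Step 2: construct the transfer map.} I will use Viterbo's stepped Hamiltonians: $H$ is small on $K$, increases steeply with slope $\mu$ near $\partial K$, is nearly constant on $\Omega\setminus K$, and has a second steep ramp of slope $\mu'$ near $\partial\Omega$. For these, the low-action part of the Floer complex consists precisely of the orbits near $\partial K$ and of the constant orbits in $K$. I will carry this out in the $S^{1}$-equivariant (Borel) setting with $S^{1}$-families of Hamiltonians and almost complex structures, parametrized over $ES^{1}$ approximations. Projecting to the action window below the plateau gives $\Phi$. Continuation maps then show that $\Phi$ preserves the action filtration, commutes with $U$ (since $U$ is defined by cap product with the generator on $BS^{1}$ and is natural under these chain maps), and is compatible with the splitting into constant orbits and nonconstant orbits. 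This compatibility is exactly what makes $\delta$ natural, because $\delta$ is the connecting map of the tautological sequence $SH^{S^{1},0}\to SH^{S^{1}}\to SH^{S^{1},+}$, and on the constant part the transfer map is the restriction map in homology.

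\textbf{Main obstacle.} I expect the hardest step to be the action bookkeeping. One must show that the orbits on the outer ramp, and the orbits on the plateau region of $\Omega\setminus K$, have action above $T$ once the plateau height is chosen appropriately. This requires exactness: without it, the actions of orbits in $K$ computed with $\lambda_{\Omega}$ and with $\lambda_{K}$ would differ by periods of a closed, non-exact form and could not be controlled. I would first verify this action estimate for autonomous radial Hamiltonians in the completion. Then I would carry along the $S^{1}$-equivariant perturbations, which are $C^{2}$-small and so shift actions only by an arbitrarily small amount.
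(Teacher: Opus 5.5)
The paper gives no argument for this statement; it only quotes \cite[Theorem 1.24]{gutt-hutchings-AGT-2018}. Your outline follows the route behind that result: build a filtered Viterbo transfer map on $S^{1}$-equivariant homology that commutes with $U$ and intertwines $\delta$ with $H_{*}(\Omega,\partial\Omega)\to H_{*}(K,\partial K)$. Deducing $c_{k}(K)\le c_{k}(\Omega)$ from such a map is fine.

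\textbf{The gap is in Step~2}, where you isolate the $K$-part of the step-Hamiltonian complex. Your list of orbits leaves out those at the upper, concave corner of the inner ramp, where the slope drops from $\mu$ back to $0$. These orbits do not belong to the complex of $K$, and choosing the plateau height does not push them out of the window.

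To see this, write the ramp as $H=h(r)$ on the collar $[1,R]\times\partial K$ swept out by the Liouville flow of $\Omega$. Here $R$ is bounded by the geometry of $K\subset\Omega$.
\begin{itemize}
\item An orbit of Reeb period $h'(r)$ at level $r$ has action $A=rh'(r)-h(r)$, and $dA=r\,dh'$.
\item On the linear part $A=h'=\mu$. Along the concave corner $A$ decreases to $-C$, and since $dA\le dh'$ there, $A\le h'$.
\item So a concave-corner orbit of period less than your level $T$ has action less than $T$. To leave the window $(\epsilon,T)$ it would need action at most $\epsilon$.
\item While $A$ crosses $(\epsilon,T)$, the slope $h'$ sweeps an interval of length at least $(T-\epsilon)/R$ that reaches above $T$. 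That interval would have to contain no Reeb periods of $\partial K$.
\item Changing $C$ only moves this interval. When the period spectrum has no gap that long, some of these orbits lie in the window. That is the typical situation for large $T$, and $T\approx ka$ with $k\to\infty$ is exactly the regime this paper needs.
\end{itemize}
In that case the ``low-action part'' is not the complex of $K$. Discarding those orbits requires knowing they span a subcomplex, and action alone does not tell you that.

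\textbf{The missing ingredient is the integrated maximum principle of Abouzaid--Seidel}, not action separation.
\begin{itemize}
\item Pick a level $\set{r=\rho}$ in the linear part of the inner ramp. Arrange that there $H=\mu r+\mathrm{const}$, $J$ is of contact type (so $\lambda\circ J=dr$), and the Borel perturbations vanish.
\item Apply Stokes to the part of a solution lying in $\set{r\ge\rho}$. Take a cylinder joining an orbit in $\set{r<\rho}$ to one in $\set{r>\rho}$, with the outer orbit as its higher-action end. The computation shows that outer orbit has action at least $\mu$.
\item Every outer orbit has action less than $\mu$. On the concave corner this holds because $A$ strictly decreases from $\mu$. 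On the plateau the action is $\approx -C$. On the outer ramp it is $\approx p-C$ for periods $p<\mu'$, and this is the only place the plateau height matters.
\item Hence the outer generators span a subcomplex for $d_{\mathit{eq}}$, and for the continuation maps, wherever their actions lie.
\item By the no-escape lemma, solutions with both ends inside stay inside. So the quotient is the equivariant complex of $K$ with slope $\mu$, and the quotient map is filtered. Compatibility with $U$ and $\delta$ then goes as you describe.
\end{itemize}

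\textbf{A smaller point on Step~1.} Do not alter the form on $K$: the Liouville field of $\lambda_{K}+df$ need not point outward along $\partial K$. Instead replace $\lambda_{\Omega}$ by $\lambda_{\Omega}-d\tilde f$, where $\tilde f$ is a compactly supported extension of $f\circ\iota^{-1}$ into $\mathrm{int}(\Omega)$. This changes no Floer data or actions on $\Omega$ and makes $\iota(K)$ an honest Liouville subdomain, with no approximation needed.
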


The third fact we will need is a special case of the \emph{Cartesian product formula} for the Gutt--Hutchings capacities:
\begin{theorem}\label{theorem:cartesian-product}
  Let $D(a)$ be the closed disk of area $a$ and let $Q$ be a Liouville domain with $c_{i}(\Omega)=\infty$ for $i>i_{0}$. Then there is a Liouville subdomain $K\subset Q\times D(a)$ such that $\liminf c_{k}(K)/k\ge a$.
\end{theorem}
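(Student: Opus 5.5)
Note first that the hypothesis should read $c_{i}(Q)=\infty$ for $i>i_{0}$.

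The plan is to compute, or at least bound from below, the Gutt--Hutchings capacities of a suitable smoothing $K$ of the product $Q\times D(a)$. I would use the $S^{1}$-equivariant Künneth-type structure of symplectic homology for products, in the style of the product formula for $c_{k}$ of Gutt--Hutchings (their Conjecture/Theorem for $c_{k}(X_{1}\times X_{2})=\min_{i+j=k}(c_{i}(X_{1})+c_{j}(X_{2}))$). For the lower bound only one inequality is needed:
\begin{equation*}
  c_{k}(K)\ \ge\ \min_{i+j=k}\big(c_{i}(Q)+c_{j}(D(a))\big),
\end{equation*}
with the convention $c_{0}=0$. Granting this, $c_{i}(Q)=\infty$ for $i>i_{0}$ forces $i\le i_{0}$, so $j\ge k-i_{0}$. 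Since $c_{j}(D(a))=ja$ by Proposition \ref{prop:GH-ellipsoid} with $n=1$, we get $c_{k}(K)\ge (k-i_{0})a$. Dividing by $k$ and letting $k\to\infty$ gives $\liminf c_{k}(K)/k\ge a$.

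Concretely, I would take $K$ to be the Liouville subdomain obtained by rounding the corners of $Q\times D(a)$ with the Liouville form $\lambda_{Q}+\lambda_{D}$. Choosing $K$ inside the product (slightly shrunken in both factors) makes the corner smoothing harmless, at the cost of $a$ replaced by $a-\delta$ and $Q$ by a slightly shrunken copy. To get the full bound $\ge a$ one can instead exhaust: use that $c_{k}$ of a shrunken copy of $Q$ differs from $c_{k}(Q)$ by a factor $e^{-\delta}$ (conformality), which is infinite exactly when $c_{k}(Q)$ is. I would pick $K$ so that its boundary is a graph over the corner region on which the Reeb dynamics split.

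The lower bound itself I would prove via the Gutt--Hutchings characterization. $c_{k}(K)\le L$ means the class $U^{k-1}$-related generator (the image of $[\mathrm{pt}]\otimes$ the $k-1$ power of the $u$-variable) is hit from filtered $S^{1}$-equivariant SH below action $L$. I would use a split Hamiltonian $H_{Q}+H_{D}$ adapted to the smoothed product. The $S^{1}$-equivariant complex of the product then maps to a tensor-type complex: the diagonal $S^{1}$ acts, and Borel equivariant homology of a product with diagonal action is computed by a Künneth spectral sequence over $H^{*}(BS^{1})$. Orbits of $K$ are pairs (orbit of $Q$, orbit of $D$), and actions add. Hence a chain killing the $k$th class must involve a $D$-orbit of multiplicity $j$ and a $Q$-cycle killing the $i$th class of $Q$, with $i+j\ge k$. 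That forces action at least $c_{i}(Q)+ja$.

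The main obstacle is making this Künneth argument rigorous in the equivariant, filtered setting, in particular handling the corner and constant orbits. As a fallback I would use the fact that $D(a)$'s equivariant SH is explicit ($u^{-j}$ classes at action $ja$), which is the approach in Gutt--Hutchings' proof for ellipsoids. The argument then becomes a spectral sequence filtered by the $D$-factor's winding number. That reduces the estimate to showing that classes in winding number $j$ cannot kill the $u^{-k+1}$ generator unless the $Q$-part kills the $u^{-(k-j)+1}$ generator, which is a filtration-degree bookkeeping argument.
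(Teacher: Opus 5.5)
Your reduction is sound, and you are right that the hypothesis should read $c_{i}(Q)=\infty$. Granting $c_{k}(K)\ge\min_{i+j=k}(c_{i}(Q)+c_{j}(D(a)))$, one does get $c_{k}(K)\ge (k-i_{0})a$. The gap is that this inequality is the entire content of the theorem, and you do not prove it. The paper remarks that the Cartesian product formula is not currently known for the Gutt--Hutchings capacities, and your sketch does not supply it, for three reasons.
\begin{enumerate}
\item With the diagonal $S^{1}$-action, the Borel complex of a product is not the tensor product over $\Q[[u]]$ of the two equivariant complexes. At best one expects a derived tensor product, i.e.\ an Eilenberg--Moore type spectral sequence with Tor terms.
\item A split Hamiltonian $H_{Q}+H_{D}$ is not of the form $-c\,r_{K}$ near infinity for your rounded product $K$. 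So it does not compute $\mathit{SH}^{\mathit{eq}}_{c}(K)$ without a further comparison, and this comparison must deal with the mixed corner orbits.
\item Nothing in the sketch controls the action filtration or the compatibility of PSS with the splitting.
\end{enumerate}
The sentence ``hence a chain killing the $k$th class must involve\dots'' is exactly the unproved step. Your fallback, filtering by the disk winding, points in the right direction, but it is where all the work lies, not bookkeeping. Separately, exhaustion only gives $\liminf\ge a-\delta$ for a family of domains. It does not give one $K$ with $\liminf c_{k}(K)/k\ge a$.

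The missing idea is that no filtered information about $Q$, and no general product formula, is needed.
\begin{itemize}
\item The paper uses Borel data $-b_{1}e^{\ell(r_{2})}\delta f(\delta^{-1}e^{-\ell(r_{2})}r_{1})-b_{2}r_{2}+\beta(2-2r_{2})P_{\eta,t}$ with $b_{2}\notin a\Z$. Then every orbit lies in $W\times 0$, and the disk contributes only the $S^{1}$-fixed constant orbit $x_{k}$ at the origin, with $k=\lfloor b_{2}/a\rfloor$. The ``winding'' is recorded by the index of $x_{k}$, not by new generators of action $ja$.
\item With $J$ split near $W\times 0$, a compactness argument confines the differential and PSS solutions there. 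This gives $\mathit{CF}^{eq}(H_{\eta,t})\cong\mathit{CF}^{eq}(-b_{1}\rho_{1}+P_{\eta,t})\otimes x_{k}\Q[u^{-1}]$, compatible with PSS when $b_{2}<a$.
\item The continuation map raising $b_{2}$ from $(0,a)$ into $(ka,(k+1)a)$ equals $u^{k}\tilde{\mathfrak{c}}$ with $\tilde{\mathfrak{c}}$ a quasi-isomorphism. The disk component has Fredholm index $2(k-k')$, which kills the lower terms. The top rigid count is nonzero because the capacities of $D(a)$ are unbounded.
\item Hence if $u^{-i_{0}-k}1+\dots$ were killed at disk slope in $(ka,(k+1)a)$, then $u^{-i_{0}}1+\dots$ would be killed for $Q$ at slope $b_{1}$. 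Since $c_{i_{0}+1}(Q)=\infty$, this is impossible for every $b_{1}$.
\item Because $b_{1}$ plays no role, one may set $b_{1}=b_{2}'=b$. The Hamiltonian then becomes $-br$ for the single domain $K=\set{r\le 1}$, and one concludes $c_{k+i_{0}+1}(K)>ka$.
\end{itemize}
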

The proofs of Theorem \ref{theorem:cartesian-product} and Lemma \ref{lemma:aspherical} are given in the appendix \S\ref{sec:on-the-gutt-hutchings-capacities}.
\begin{remark}
  The general Cartesian product formula:
  \begin{equation*}
    c_{k}(X_{1}\times X_{2})=\min_{i+j=k}\set{c_{i}(X_{1})+c_{j}(X_{2})}
  \end{equation*}
  is stated for the Ekeland-Hofer capacities in \cite{cieliebak-hofer-latschev-schlenk,gutt-hutchings-AGT-2018}, but it is seemingly not yet proved for the Gutt--Hutchings capacities (at the time of writing). In personal communication with Gutt, the author was informed that a proof via some sort of K\"unneth isomorphism is suspected by the experts.
\end{remark}

The combination of Propositions \ref{proposition:BC-prop}, \ref{proposition:GH-mono-prop} and Theorem \ref{theorem:cartesian-product} will easily yield the proof of Theorem \ref{theorem:main-tech}. The argument is given in \S\ref{sec:proof-main-tech}.

\begin{remark}
  If $\Omega\subset \C^{n}$ is uniformly convex and invariant under the diagonal $S^{1}$-action, then the arguments of Ostrover communicated in \cite[Proposition 1.4]{gutt-hutchings-ramos-JSG-2022} establish that, up to the action of the unitary group, $\Omega$ already lies in $E(c,\infty,\dots)$, where $c$ is the length of a systole. Then, by uniform convexity, $\Omega$ actually lies in $E(c,cN,cN,\dots)$ for $N$ large enough. By Proposition \ref{prop:GH-ellipsoid}:
  \begin{equation*}
    \liminf \frac{c_{k}(\Omega)}{k}\le c\frac{N}{N+(n-1)}<c,
  \end{equation*}
  and so the strong Arnol\'d chord conjecture for E3 Legendrians holds for such uniformly convex and $S^{1}$-invariant domains.
\end{remark}

\subsection{Further questions}
\label{sec:further-questions}

We pose one conjecture and one question.

\begin{conjecture}
  If $\Omega$ is a uniformly convex domain in $\R^{2n}$, $n>1$, then:
  \begin{equation*}
    \liminf_{k\to\infty} \frac{c_{k}(\Omega)}{k}<c_{1}(\Omega).
  \end{equation*}
  We note that this holds in the case $n=2$, by Lemma \ref{lemma:ellipsoid-flex}.
\end{conjecture}

\begin{question}
  Is there an E3 manifold $L$ with $c_{1}(D^{*}TL)<\infty$? More generally, is there any Liouville domain $Q$ such that $c_{1}(Q)<\infty$ but $c_{k}(Q)=\infty$ for $k$ sufficiently large?
\end{question}
\begin{remark}
  In \cite[pp.\,1049]{ganatra-siegel-JDG-2024}, the authors introduce a numerical invariant $\mathbb{F}(Q)$ for a Liouville domain which is equal to the number of Gutt--Hutchings capacities which are finite. Thus the question asks whether there is some $Q$ such that $\mathbb{F}(Q)\ne 0$ and $\mathbb{F}(Q)\ne \infty$.
\end{remark}

\subsection{Acknowledgements}
\label{sec:acknowledgements}

The author thanks Oliver Edtmair for answering questions about his paper \cite{edtmair-gafa-2024}, and Jean Gutt for answering questions about his paper \cite{gutt-hutchings-AGT-2018}. Of course, any mistaken interpretations of their results are solely the responsibility of the present author. The author is also grateful to Filip Broćić for many informative discussions, especially those relating to our joint work \cite{brocic-cant-arXiv-2026} which heavily influenced this note, and to Adi Dickstein and Mark Gudiev for many clarifying discussions about equivariant Floer cohomology, which heavily influenced the exposition in~\S\ref{sec:on-the-gutt-hutchings-capacities}. The author further thanks Pazit Haim-Kislev, Egor Shelukhin, and Shira Tanny for illuminating conversations during the OC60 conference at ETH Zürich in summer 2026, where the majority of this note was written. The author thanks Jun Zhang, Julio Sampietro Christ, Xie Dong for useful comments.

\section{Proofs}
\label{sec:proofs}

The remaining things to prove are:
\begin{itemize}
\item Lemma \ref{lemma:aspherical} and Theorem \ref{theorem:cartesian-product} (deferred to the appendix \S\ref{sec:on-the-gutt-hutchings-capacities});
\item Theorem \ref{theorem:main-tech}, using Propositions \ref{proposition:BC-prop}, \ref{proposition:GH-mono-prop} and Theorem \ref{theorem:cartesian-product};
\item Lemma \ref{lemma:ellipsoid-flex} on the embedding of a uniformly convex domain $\Omega$ into the ellipsoid $E(c,cN)$.
\end{itemize}

\subsection{Proof of Theorem \ref{theorem:main-tech}}
\label{sec:proof-main-tech}

The argument proceeds by contradiction: suppose that $L$ does not bound any Reeb chords of length at most $\liminf c_{k}(\Omega)/k$ (we assume this limit is finite, as per the statement of Theorem \ref{theorem:main-tech}). Then, by a compactness argument, we can find:
\begin{equation}\label{eq:a-strictly-greater}
  a>\liminf_{k\to\infty} c_{k}(\Omega)/k
\end{equation}
such that $L$ does not bound any chords of length at most $a$.

Proposition \ref{proposition:BC-prop} implies there is an embedding $D_{\epsilon}T^{*}L\times D(a)\to \Omega$ taking values in the interior. Proposition \ref{proposition:GH-mono-prop} then implies:
\begin{equation*}
  c_{k}(K)\le c_{k}(\Omega)
\end{equation*}
for any Liouville subdomain $K\subset D_{\epsilon}T^{*}L\times D(a)$. Now we use the assumption that $L$ is E3 to conclude that $c_{i}(D_{\epsilon}T^{*}L)=\infty$ for $i>i_{0}$. Theorem \ref{theorem:cartesian-product} then implies there is a subdomain $K$ such that:
\begin{equation*}
  a\le \liminf c_{k}(K)/k\le \liminf c_{k}(\Omega)/k.
\end{equation*}
This contradicts \eqref{eq:a-strictly-greater}, and completes the proof.\hfill$\square$

\subsection{Proof of Lemma \ref{lemma:ellipsoid-flex}}
\label{sec:proof-flex}

Let $\Omega\subset \R^{4}$ be a uniformly convex domain and denote by $c$ the length of a systole $\gamma$.

In \cite[Corollary 1]{abbondandolo-edtmair-kang-arXiv-2024} it is shown that, for any uniformly convex domain $\Omega$, there is an symplectic embedding of $\Omega$ into a cylinder $E(c,\infty)$. The strategy is as follows, assuming uniform convexity:
\begin{enumerate}[label=(\alph*)]
\item prove that the systole $\gamma$ is a \emph{Hopf orbit}, i.e., $\gamma$ is unknotted and has self-linking number $-1$, using \cite{abbondandolo-edtmair-kang-arXiv-2024} and \cite[Theorem 1.7]{hryniewicz-JSG-2014};
\item use \cite[Proposition 2.8]{florio-hryniewicz-ASENS-2025} to conclude $\gamma$ bounds a $\partial$-strong disk-like global surface of section;
\item\label{step:edtmair} use the construction of \cite{edtmair-gafa-2024}, which assumes $\gamma$ bounds a $\partial$-strong disk-like global surface of section, to conclude a symplectic embedding $\phi:\Omega\to E(c,\infty)$.
\end{enumerate}
The proof of Lemma \ref{lemma:ellipsoid-flex} involves inspection of the last step \ref{step:edtmair}, and shows that the same construction implies $\phi(\Omega)$ lies in an ellipsoid $E(c,cN)$, if $N$ is chosen large enough.

We recall that the construction of the embedding $\phi:\Omega\to E(c,\infty)$ from \cite{edtmair-gafa-2024} is such that $\phi(\Omega)$ is contained in a special domain $A(c,H)\subset E(c,\infty)$, where $H:\R/\Z\times D(c)\to \R$ is a smooth function satisfying:
\begin{enumerate}[label=(O\arabic*)]
\item\label{oliver-1} $H$ is strictly positive in the interior $\mathit{int}(D(c))$,
\item\label{oliver-2} There exists a constant $N>0$ such that, in some neighbourhood of $\partial D(c)$, the function $H$ is given by $H(t,z)=N(c-\pi\abs{z}^{2})$.
\end{enumerate}
The basic properties of $A(c,H)$ are described in \cite[Lemma 3.5]{edtmair-gafa-2024}.

If $H,G$ are two functions satisfying \ref{oliver-1} and \ref{oliver-2} (allowing different constants $N$), it is observed on \cite[pp.\,1419]{edtmair-gafa-2024} that:
\begin{itemize}
\item $H\le G\implies A(c,H)\le A(c,G)$,
\item $A(c,G)=E(c,cN)$ if $G=N(c-\pi\abs{z}^{2})$.
\end{itemize}
Then, for any smooth function $H$ satisfying \ref{oliver-2}, for some constant $N'$, we can obviously pick $N\gg N'$ such that:
\begin{equation*}
  H(t,z)\le N(c-\pi\abs{z}^{2})\text{ everywhere on }\R/\Z\times D(c),
\end{equation*}
and so by the above observations it follows that $A(c,H)\subset E(c,cN)$. This completes the proof of Lemma \ref{lemma:ellipsoid-flex}.\hfill$\square$

\appendix

\section{On the Gutt--Hutchings capacities}
\label{sec:on-the-gutt-hutchings-capacities}

The goals of this section are threefold:
\begin{itemize}
\item review the construction of the Gutt--Hutchings capacities;
\item prove the case of the Cartesian product formula (Theorem \ref{theorem:cartesian-product}) needed in the body of the text;
  \item prove Lemma \ref{lemma:aspherical} that aspherical manifolds are E3.
\end{itemize}

For the history and development of the Gutt--Hutchings capacities, we refer the reader to \cite[\S5]{gutt-hutchings-AGT-2018} and the references therein \cite{viterbo-GAFA-1999,seidel-IP-2008,bourgeois-oancea-IMRN-2017,gutt-JSG-2017}. Other researchers discuss $S^{1}$-equivariant symplectic homology in a manner similar to the way we will, e.g., \cite{zhao-thesis-2016,zhou-jtopol-2021,zhou-adv-math-2022,li-yin-arXiv-2023}. The framework is based on the theory of pseudo-holomorphic curves \cite{gromov-inventiones-1985}.

\subsection{Filtered symplectic homology for a Liouville domain $\Omega$}
\label{sec:filt-s1-equiv}

For the basic set-up of symplectic homology, we follow the notations of \cite{brocic-cant-arXiv-2026}; briefly:
\begin{itemize}
\item $\lambda$ is the Liouville form, $\d\lambda=\omega$;
\item $Z$ is the \emph{Liouville vector field} defined by $\omega(Z,-)=\lambda$;
\item $W$ is the completion of $\Omega$, and $SY\subset W$ is the symplectization end;
\item $r|_{\Omega}=1$ and $\d r(Z)=r$ is the radial function (defined on $SY$);
\item $R=X_{r}$ is the Reeb vector field associated to $\Omega$ (defined on $SY$);
\item $\mathit{Spec}(\Omega)=\set{c:cR\text{ has a 1-periodic orbit}}$;
\item $J$ is an $\omega$-tame almost complex structure which is $Z$ invariant outside of a compact set;
\item given a Hamiltonian function $H$ the associated Hamiltonian vector field $X$ is defined by $\omega(-,X)=\d H$.
\end{itemize}
Contrary to \cite{brocic-cant-arXiv-2026}, in this paper we adopt homological conventions, which means that our Floer cylinders go from left to right; see Figure \ref{fig:left-to-right}. This is to avoid linguistic confusion between $S^{1}$-equivariant homology and cohomology (which involves an additional choice of the direction of the pseudogradient on $BS^{1}$; see \S\ref{sec:borel-equiv-data}).

\begin{figure}[h]
  \centering
  \begin{tikzpicture}[yscale=.7]
    \draw (0,0)--(8,0)arc(-90:90:{0.2 and 0.5})node[pos=0.5,right]{output, $\gamma_{+}$}coordinate(X)--(0,1)arc(90:450:{0.2 and 0.5})node[pos=0.25,left]{input, $\gamma_{-}$};
    \draw[dashed] (X) arc(90:270:{0.2 and 0.5});
    \path (0,0)--node{$\bd_{s}u+J(u)(\bd_{t}u-X_{s,t}(u))=0$}(8,1);
  \end{tikzpicture}
  \caption{Homological conventions.}
  \label{fig:left-to-right}
\end{figure}

\subsection{Borel equivariant data}
\label{sec:borel-equiv-data}

We follow the ideas of \cite{seidel-smith-GAFA-2010} for $G=\Z/2\Z$, which mostly adapt easily to the case of $G=S^{1}$. A discussion of this case is given in \cite[\S2.2]{bourgeois-oancea-IMRN-2017} and \cite[\S2.3]{zhao-thesis-2016}.

\subsubsection{Morse Smale pseudogradient on $BS^{1}$}
\label{sec:pseudogradient-bs1}

\begin{definition}
  Let $V$ be the vector field on $BS^{1}=\C P^{\infty}$ whose time $s$ flow is given by $[z_{0}:z_{1}:z_{2}:\dots]\mapsto [z_{0}:e^{-s}z_{1}:e^{-2s}z_{2}:\dots]$. This is a perfect Morse-Smale pseudogradient.
\end{definition}
We denote by $1:=[1:0:0:\dots]$. Introduce the self-similarity map:
\begin{equation*}
  \tau([z_{0}:z_{1}:\dots])=[0:z_{0}:z_{1}:\dots],
\end{equation*}
and let $u^{-k}=\tau^{k}(1)$. The space of flow lines from $u^{-k}$ to $1$ is a smooth manifold of dimension $2k$. Moreover, $V$ is preserved by the transformation $\tau$ (in particular, flow lines are sent to flow lines).

\subsubsection{Lifted pseudogradient on $ES^{1}$}
\label{sec:pseudogradient-bs1}

In order to lift the pseudogradient $V$ to a pseudogradient $P$ on $S^{\infty}=ES^{1}$, we require a connection on the $S^{1}$-bundle $ES^{1}\to BS^{1}$. To simplify the analysis at asymptotics, we will pick our connection in such a way that it is flat near each critical point $u^{-k}$.

Let $\beta$ be a standard cut-off function, as in \cite[Definition 2.4]{brocic-cant-arXiv-2026}, and let:
\begin{equation*}
  f_{j}([z_{0}:z_{1}:\dots])=\beta(2-2\sum_{i\ne j} \abs{z_{j}^{-1}z_{i}}^{2})
\end{equation*}
The set $\set{f_{j}>0}$ is the unit ball in the affine coordinate chart $z_{j}\ne 0$, and:
\begin{equation}\label{eq:flat-balls}
  B_{j}=\set{\sum \abs{z_{j}^{-1}z_{i}}^{2}\le 1/2}
\end{equation}
is contained in the set where $f_{j}=1$. Introduce:
\begin{equation}\label{eq:the-connection-1-form}
  A=(1-\sum f_{j})\frac{\sum x_{i}\d y_{i}-y_{i}\d x_{i}}{\sum x_{i}^{2}+y_{i}^{2}}+\sum f_{j}\frac{x_{j}\d y_{j}-y_{j}\d x_{j}}{x_{j}^{2}+y_{j}^{2}},
\end{equation}
which is considered as a differential $1$-form on $\C^{\infty}$, with $z_{j}=x_{j}+iy_{j}$. The form $A$ is a \emph{global angular form} for the circle bundle $ES^{1}\to BS^{1}$, and consequently $\xi=\ker A$ defines an Ehresmann connection.

\begin{definition}
  The pseudogradient $P$ on $ES^{1}=S^{\infty}$ is defined to be the unique lift of $V$ satisfying $A(P)=0$. The zeros of $P$ are precisely the circles of the bundle $ES^{1}\to BS^{1}$ living over $u^{-j}$, $j=0,1,\dots$.
\end{definition}

We briefly explain the form of $P$ near the critical circle lying over $u^{-j}$. Above the $z_{j}\ne 0$ affine coordinate chart, $S^{\infty}$ has coordinates:
\begin{equation}\label{eq:coordinates-on-Sinfty}
  \theta_{j}=\frac{z_{j}}{\abs{z_{j}}},\quad w_{0}=\frac{z_{0}}{z_{j}},\quad w_{1}=\frac{z_{1}}{z_{0}},\dots
\end{equation}
where $\theta_{j}$ is $S^{1}$-valued, and $w_{i}$ are $\C$-valued (we omit $w_{j}$ from the list).
\begin{lemma}
  In the coordinates \eqref{eq:coordinates-on-Sinfty}, the vector field $P$ is given by:
  \begin{equation}\label{eq:diag-lin-flow}
    P=\sum_{i=0}^{\infty} (i-j)w_{i}\frac{\partial}{\partial w_{i}}
  \end{equation}
  on the set where $w_{0}^{2}+w_{1}^{2}+\dots\le 1/2$ (we omit $w_{j}$ from this sum).
\end{lemma}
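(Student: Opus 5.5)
The plan is to reduce everything to the affine chart $z_{j}\neq 0$, where the connection form $A$ becomes trivial. After that, the horizontal lift of $V$ is just $V$ written in the affine coordinates, with no $\partial/\partial\theta_{j}$ component. I read the coordinates \eqref{eq:coordinates-on-Sinfty} as $w_{i}=z_{i}/z_{j}$ for $i\neq j$ (the printed $w_{1}=z_{1}/z_{0}$ looks like a typo). Then $(w_{i})_{i\ne j}$ are the standard affine coordinates on $BS^{1}$ over the chart, and $\theta_{j}$ is the fibre coordinate.

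\textbf{Step 1: the connection is flat on the region.} The region $\sum_{i\ne j}\abs{w_{i}}^{2}\le 1/2$ is exactly the ball $B_{j}$ of \eqref{eq:flat-balls}, so $f_{j}=1$ there.
\begin{itemize}
\item For $k\neq j$, the support of $f_{k}$ lies in $\sum_{i\ne k}\abs{z_{i}/z_{k}}^{2}<1$, which forces $\abs{z_{j}}<\abs{z_{k}}$.
\item On $B_{j}$ we have $\abs{z_{k}/z_{j}}^{2}\le 1/2$, so $\abs{z_{k}}<\abs{z_{j}}$, and hence $f_{k}=0$.
\end{itemize}
Therefore $\sum f_{k}=1$ on $B_{j}$, the first term of \eqref{eq:the-connection-1-form} vanishes, and
\begin{equation*}
A=\frac{x_{j}\,\d y_{j}-y_{j}\,\d x_{j}}{x_{j}^{2}+y_{j}^{2}}=\d\theta_{j}.
\end{equation*}
So the condition $A(P)=0$ says exactly that $P$ has no $\partial/\partial\theta_{j}$ component. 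Since the $w$'s are pulled back from the base, $P$ is the vector field with the same $w$-components as $V$.

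\textbf{Step 2: compute $V$ in affine coordinates.} The flow of $V$ sends $z_{i}\mapsto e^{-is}z_{i}$, so $w_{i}=z_{i}/z_{j}$ evolves by $w_{i}\mapsto e^{-(i-j)s}w_{i}$. This is a diagonal linear flow. Differentiating at $s=0$ gives the components $\pm(i-j)w_{i}$ on each $w_{i}$, the sign depending on the orientation convention for the generator.

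As a consistency check, the factors $e^{-js}$ on $z_{j}$ are positive reals, so they leave $\theta_{j}=z_{j}/\abs{z_{j}}$ unchanged. Equivalently, the obvious lift $z\mapsto (e^{-is}z_{i})/\lVert\cdot\rVert$ to $S^{\infty}$ preserves $\theta_{j}$. This confirms independently that this lift is horizontal for $\d\theta_{j}$ and is therefore $P$. This is consistent with $u^{-j}$ being the critical point, with the $i<j$ directions unstable and the $i>j$ directions stable.

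\textbf{Main obstacle.} Beyond the coordinate bookkeeping there is no real difficulty. The one point needing care is the overall sign and convention in \eqref{eq:diag-lin-flow}: the flow as written gives $-(i-j)w_{i}\,\partial_{w_{i}}$ as a real vector field. So I would check whether the statement intends $P$ as the generator of $s\mapsto\phi_{-s}$, or uses the opposite (homological) direction convention mentioned in \S\ref{sec:borel-equiv-data}, and fix the sign accordingly. The structural content, that $P$ is diagonal linear with weights $i-j$ and has no $\theta_{j}$ component, does not depend on this choice.
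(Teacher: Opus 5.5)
Your proposal is correct and is exactly the ``straightforward computation'' with the connection form that the paper gives as its proof: on $B_{j}$ one has $f_{j}=1$ and $f_{k}=0$ for $k\neq j$, so $A=\d\theta_{j}$ there, and $P$ is just $V$ written in the affine coordinates $w_{i}=z_{i}/z_{j}$.

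Your sign caveat is also right, and you can settle it rather than hedge. Since $V$ generates $z_{i}\mapsto e^{-is}z_{i}$, and the paper needs $1$ to be the sink (its flow lines run from $u^{-k}$ to $1$), the correct formula is $P=\sum_{i\neq j}(j-i)w_{i}\,\partial/\partial w_{i}$. So the displayed sign is a typo in the statement, like $w_{1}=z_{1}/z_{0}$, and not a convention.
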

\begin{proof}
  This is a straightforward computation, using \eqref{eq:the-connection-1-form}.
\end{proof}

In particular, near the critical circle lying over $u^{-j}$, the coordinate $\theta_{j}$ is an integral of motion of $P$.

Each flow line $v:\R\to S^{\infty}$ of $P$, satisfying $v'(s)=P(v(s))$, has well-defined asymptotic zeros of $P$.
\begin{definition}
  The space of flow lines of $P$ joining points on the circle over $u^{-j_{-}}$ to points on the circle over $u^{-j_{+}}$ is denoted $\mathscr{P}(j_{-},j_{+})$.
\end{definition}
The $\tau$ invariance of the connection yields canonical isomorphisms:
\begin{equation}\label{eq:tau-identification}
  \tau^{b}:\mathscr{P}(j_{-},j_{+})\simeq \mathscr{P}(j_{-}-j_{+},0),
\end{equation}
i.e., it suffices to only consider the flow lines ending at $1$.

\subsubsection{Borel data}
\label{sec:borel-data}

Let $H$ be an autonomous Hamiltonian on $W$ which commutes with $Z$ and has no $1$-periodic orbits outside of a compact set.
\begin{definition}
  \emph{Borel data} extending $H$ is a family $H_{\eta,t}$, $\eta\in S^{\infty}$, $t\in \R/\Z$, of time-dependent Hamiltonian functions on $W$ such that:
  \begin{itemize}
  \item $H_{\eta,t}=H$ holds outside of a compact set (which can be chosen uniformly on compact subsets of $S^{\infty}$),
  \item $H_{e^{2\pi i\theta}\eta,t}=H_{\eta,t-\theta}$,
  \item $H_{t}=H_{\eta,t}$ is non-degenerate for $\eta=(1,0,\dots)$,
  \item $H_{\tau(\eta),t}=H_{\eta,t}$ where $\tau$ is the self-similarity map.
  \end{itemize}
  We will refer to $H_{t}$, and its generator $X_{t}$, as the \emph{basepoint system}.
\end{definition}

\subsubsection{Moduli spaces associated to Borel data}
\label{sec:moduli-spac-assoc}

Given Borel data $H_{\eta,t}$ as in \S\ref{sec:borel-data}, we define the moduli space $\mathscr{M}(H_{\eta,t})$ of solutions $(v,u)$ to:
\begin{equation*}
  \left\{
    \begin{aligned}
      &v:\R\to S^{\infty},\quad u:\R\times \R/\Z\to W,\\
      &v'(s)=P(v(s)),\\
      &\partial_{s}u+J(u)(\partial_{t}u-X_{v(s),t}(u))=0,\\
      &\textstyle\int \omega(\partial_{s}u,\partial_{t}u-X_{v(s),t}(u))\d s\d t<\infty.
    \end{aligned}
  \right.
\end{equation*}
By standard results (e.g., \cite{floer-comm-math-phys-1989,salamon-notes-1997,bourgeois-oancea-IMRN-2017}) each such solution $(v,u)$ has well-defined asymptotics, in the sense that there are zeros $\eta_{-},\eta_{+}$ of $P$ and $1$-periodic orbits $\gamma_{-}$ and $\gamma_{+}$ of $X_{t}$ such that:
\begin{equation*}  
  \lim_{s\to\infty}u(\pm s,t)=\gamma_{\pm}(t-\theta_{\pm})\quad \lim_{s\to\infty}v(\pm s)=\eta_{\pm},
\end{equation*}
where $\theta_{\pm}$ are the angular coordinates of $\eta_{\pm}$ (suppose $\eta_{\pm}$ lies in the critical circle over $u^{-j_{\pm}}$). Let us extract from such a solution the data of:
\begin{itemize}
\item the asymptotic orbits $\gamma_{-},\gamma_{+}$,
\item the asymptotic indices $j_{-},j_{+}$.
\end{itemize}
These quantities are invariant under the $\R$-action by translations, and the $\R/\Z$-action $(v,u)\mapsto (e^{2\pi i \theta}v,u(s,t-\theta))$. Let us then declare:
\begin{equation}\label{eq:moduli-space}
  \mathscr{M}(H_{\eta,t};\gamma_{-},\gamma_{+};j_{-},j_{+})
\end{equation}
to be the space of solutions with the written asymptotics. As in \eqref{eq:tau-identification} there is an identification:
\begin{equation*}
  \tau^{b}:\mathscr{M}(H_{\eta,t};\gamma_{-},\gamma_{+};j_{-},j_{+})\to\mathscr{M}(H_{\eta,t};\gamma_{-},\gamma_{+};j_{-}-j_{+},0).
\end{equation*}

\begin{definition}\label{definition:regular-borel-data}
  Let us say that Borel data $H_{\eta,t}$ is \emph{regular} provided \eqref{eq:moduli-space} is cut transversally in the usual Floer theoretic sense (as a parametric moduli space of continuation cylinders).
\end{definition}

Definition \ref{definition:regular-borel-data} leaves many things implicit, e.g., how exactly does one locally present $\mathscr{M}(H_{\eta,t};\gamma_{-},\gamma_{+},j_{-},j_{+})$ as the zero set of a smooth map between Banach spaces, etc. These details are standard by now (see the discussion in \cite{zhao-thesis-2016,bourgeois-oancea-IMRN-2017}), and we expect the reader can ``read between the lines,'' with regard to Definition \ref{definition:regular-borel-data}.

\subsection{Orientation lines}
\label{sec:orientation-lines}

The Gutt--Hutchings capacities are defined using $S^{1}$-equivariant symplectic homology with rational coefficients. For this reason, some framework for extracting $\pm$ signs is required to define the Floer complexes. We follow \cite[pp.\,290]{abouzaid-EMS-2015} and \cite[\S2.1.2]{zhao-thesis-2016} and use the language of \emph{orientation lines}; see also \cite[\S2.4]{cant-kilgore-zhang-arXiv-2026}, \cite[\S C.13]{pardon-GT-2016}. Another approach is via the theory of coherent orientations, as in \cite{bourgeois-oancea-IMRN-2017}. The necessary analytical results are based on the kernel gluing results of \cite{floer-hofer-math-z-1993}.

Each orbit $\gamma$ of $X_{t}$ is associated a free rank-one $\Z$-module $\mathfrak{o}(\gamma)$. For any continuation cylinder $u$ with non-degenerate asymptotics:
\begin{itemize}
\item $\gamma_{-}(t-\theta_{-}),\gamma_{+}(t-\theta_{+})$,
\end{itemize}
e.g., as appear in \eqref{eq:moduli-space}, there is a canonical isomorphism:
\begin{equation}\label{eq:isomorphism-orientation-line}
  \mathfrak{o}(D_{u})\simeq \mathfrak{o}(\gamma_{-})\otimes \mathfrak{o}(\gamma_{+}),
\end{equation}
where $\mathfrak{o}(D_{u})$ is the orientation line of the Fredholm determinant of the linearized operator of $u$. These canonical isomorphisms are compatible with the usual gluing maps of Floer theory. The way this is used is as follows:
\begin{itemize}
\item the count of the rigid orbits of \eqref{eq:moduli-space} is valued in $\mathfrak{o}(\gamma_{-})\otimes \mathfrak{o}(\gamma_{+})$.
\item the count of ends of the $1$-dimensional component in the space of orbits of \eqref{eq:moduli-space} is also valued in $\mathfrak{o}(\gamma_{-})\otimes \mathfrak{o}(\gamma_{+})$, and this count vanishes.
\end{itemize}
Because both counts are defined using the isomorphisms \eqref{eq:isomorphism-orientation-line}, one is able to perform the usual Floer theoretic argument and prove various equalities between the rigid counts using the 1-dimensional moduli spaces.

For the above scheme to work with the moduli space \eqref{eq:moduli-space}, we should pick canonical orientations for the spaces of flow lines $\mathscr{P}$, which can be done using the complex orientations on the stable manifolds of $V$.

\subsection{The $S^{1}$-equivariant complex}
\label{sec:s1-equiv-compl}

An important ingredient in the definition of the Gutt--Hutchings capacities is the $\Q[[u]]$-module structure on the filtered $S^{1}$-equivariant homology. 

The chain complex underlying the $S^{1}$-equivariant homology is not a free $\Q[[u]]$ module, but is rather a direct sum of finitely many copies of:
\begin{equation}
  \Q[u^{-1}]:=\Q[u^{-1},u]]/u\Q[[u]].
\end{equation}
This should be considered as a $\Q[[u]]$-module, (not as a $\Q[u^{-1}]$-module). See \cite[\S3.1]{ganatra-siegel-JDG-2024} for similar notation.
\begin{definition}
  For regular Borel data $H_{\eta,t}$, define:
  \begin{equation*}
    \textstyle \mathit{CF}^{\mathit{eq}}(H_{\eta,t})=\bigoplus \Q[u^{-1}]\otimes \mathfrak{o}(\gamma),
  \end{equation*}
  where the direct sum is over $1$-periodic orbits of the basepoint system $H_{t}$.
\end{definition}

The differential on the equivariant complex is given by a formula of the form:
\begin{equation*}
  d_{\mathit{eq}}=d_{0}+ud_{1}+u^{2}d_{2}+\dots,
\end{equation*}
where:
\begin{equation*}
  d_{j}:\textstyle\bigoplus \mathfrak{o}(\gamma_{-})\to \bigoplus \mathfrak{o}(\gamma_{+})
\end{equation*}
is an endomorphism extended trivially to the tensor product with $\Q[u^{-1}]$. The term $d_{j}|_{\mathfrak{o}(\gamma_{-})}$ is defined by counting the rigid $\R\times \R/\Z$-orbits of: $$\mathscr{M}(H_{\eta,t};\gamma_{-},\gamma_{+};j,0)$$ as an element of $\mathfrak{o}(\gamma_{-})\otimes \mathfrak{o}(\gamma_{+})\simeq \mathit{Hom}(\mathfrak{o}(\gamma_{-}),\mathfrak{o}(\gamma_{+}))$, as described in \S\ref{sec:orientation-lines}.
\begin{claim}
  The differential squares to zero, $d_{\mathit{eq}}^{2}=0$.
\end{claim}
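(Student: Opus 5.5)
The plan is to expand $d_{\mathit{eq}}^{2}$ in powers of $u$. Since each $d_{j}$ acts only on the orbit factor, we get
\begin{equation*}
  d_{\mathit{eq}}^{2}=\sum_{k\ge 0}u^{k}\sum_{i+j=k}d_{i}d_{j}.
\end{equation*}
It therefore suffices to prove the identity $\sum_{i+j=k}d_{i}d_{j}=0$ for each $k$, as a map $\mathfrak{o}(\gamma_{-})\to\mathfrak{o}(\gamma_{+})$. Note that in $\Q[u^{-1}]$ only finitely many terms act nontrivially on any element, so there are no convergence issues. To prove each identity we consider the one-dimensional component of the quotient of $\mathscr{M}(H_{\eta,t};\gamma_{-},\gamma_{+};k,0)$ by the $\R\times\R/\Z$-action. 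Regularity (Definition \ref{definition:regular-borel-data}) makes it a smooth $1$-manifold. The usual energy bounds, where finiteness of energy is controlled by the action difference plus a bound on $\partial_{\eta}H$ (uniform since $H_{\eta,t}=H$ outside a compact set and the data is $\tau$-invariant), give Gromov compactness. Exactness of $W$ and the fact that $H$ has no orbits near infinity rule out bubbling and escape to infinity.

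Next we identify the boundary of the compactification. A broken configuration consists of a flow line of $P$ broken at some intermediate critical circle over $u^{-m}$ with $0\le m\le k$, together with a Floer continuation cylinder broken at an intermediate orbit $\gamma$. Breaking of the Floer component alone, without breaking of $v$, cannot occur in isolation once we quotient appropriately; more precisely, breaking in $u$ forces $v$ to break too, because both levels must be rigid modulo their own $\R\times\R/\Z$-actions. The two cases $m=0$ and $m=k$ involve a constant flow line at a critical circle, where the Floer equation reduces to the basepoint system (rotated by $\theta$). These correspond to $d_{0}$ factors, using the $S^{1}$-equivariance $H_{e^{2\pi i\theta}\eta,t}=H_{\eta,t-\theta}$ and the flatness of $P$ near critical circles from \eqref{eq:diag-lin-flow}. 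A break at the level over $u^{-m}$ gives a pair consisting of an element of $\mathscr{M}(\cdot;\gamma_{-},\gamma;k,m)$ and an element of $\mathscr{M}(\cdot;\gamma,\gamma_{+};m,0)$. Using the identification $\tau^{m}$ together with $H_{\tau(\eta),t}=H_{\eta,t}$, the first factor is identified with $\mathscr{M}(\cdot;\gamma_{-},\gamma;k-m,0)$. The count of such broken pairs is then $d_{m}d_{k-m}$.

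The remaining step is gluing. Standard Floer gluing, in its parametrized form with a broken Morse flow line and the $S^{1}$-freedom in matching the angular coordinate at the intermediate circle absorbed by the $\R/\Z$-quotient, shows that each broken pair is exactly one end of the $1$-manifold. Counting the ends with signs, via the canonical isomorphisms \eqref{eq:isomorphism-orientation-line} and the complex orientations of the spaces $\mathscr{P}$, gives $0=\sum_{i+j=k}\pm d_{i}d_{j}$. The main obstacle I expect is the sign bookkeeping: we must check that the orientation conventions on $\mathscr{P}(j_{-},j_{+})$ are compatible with the $\tau$-identifications and with the boundary orientation of the glued flow lines. Because the stable manifolds are complex, and hence even-dimensional, no Koszul signs should arise, and all terms should appear with the same sign. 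A secondary subtlety is the free $S^{1}$-action at broken levels: one must verify that the fibered product over the intermediate critical circle, after quotienting by the two separate $\R/\Z$ actions, has the right dimension. This follows from the flatness of the connection near $u^{-m}$.
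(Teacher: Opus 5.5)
Your proposal is correct and is the standard argument of \cite[\S2.2]{bourgeois-oancea-IMRN-2017} and \cite[\S2.3]{zhao-thesis-2016}: analyse the ends of the one-dimensional quotients of $\mathscr{M}(H_{\eta,t};\gamma_{-},\gamma_{+};k,0)$, using $\tau$-invariance to identify the broken levels, which is exactly what the paper cites instead of giving its own proof. Two small corrections: the ends with $m=0$ or $m=k$ are precisely those where $u$ breaks while $v$ does not, so drop your sentence claiming that breaking in $u$ forces $v$ to break (your count of these ends as $d_{0}d_{k}$ and $d_{k}d_{0}$ is right), and the $C^{0}$ bound comes from the maximum principle for the $\eta$-independent Hamiltonian $H$ near infinity, not from the absence of orbits there.
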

\begin{proof}
  See \cite[\S2.2]{bourgeois-oancea-IMRN-2017} and \cite[\S2.3]{zhao-thesis-2016}.
\end{proof}

The homology of the complex $(\mathit{CF}^{\mathit{eq}}(H_{\eta,t}),d_{\mathit{eq}})$ is denoted $\mathit{HF}^{\mathit{eq}}(H_{\eta,t})$.

\subsubsection{Continuation maps}
\label{sec:continuation-maps}

If $H\ge K$ and $H_{\eta,t}$ is regular Borel data extending $H$, while $K_{\eta,t}$ is regular Borel data extending $K$, then there is a continuation chain map:
\begin{equation*}
  \mathit{CF}^{\mathit{eq}}(H_{\eta,t})\to \mathit{CF}^{\mathit{eq}}(K_{\eta,t})
\end{equation*}
defined by counting solutions $(v,u)$ of the equation:
\begin{equation*}
  \left\{
    \begin{aligned}
      &v:\R\to S^{\infty},\quad u:\R\times \R/\Z\to W,\\
      &v'(s)=P(v(s)),\quad \partial_{s}u+J(u)(\partial_{t}u-X_{v(s),s,t}(u))=0,\\
      &\textstyle\int \omega(\partial_{s}u,\partial_{t}u-X_{v(s),s,t}(u))\d s\d t<\infty,
    \end{aligned}
  \right.
\end{equation*}
where $X_{\eta,s,t}$ is the generator of (a small perturbation of) the monotone homotopy $(1-\beta(s))H_{\eta,t}+\beta(s)K_{\eta,t}$, where $\beta$ is as in \S\ref{sec:pseudogradient-bs1}.

The resulting chain map is independent, up to chain homotopy, of the choice of perturbation. Moreover, the composition of continuation maps is again a continuation map, and the continuation map associated to the constant homotopy, when $H_{\eta,t}=K_{\eta,t}$, is equal to the identity map. This is described in \cite[\S2.3]{zhao-thesis-2016}, and the ideas follow Floer's arguments in \cite{floer-comm-math-phys-1989}.

Consequently, one has a well-defined invariant $\mathit{HF}^{\mathit{eq}}(H)$ which depends only on the ideal restriction $H$.
\begin{definition}
  The filtered $S^{1}$-equivariant symplectic cohomology group is:
  \begin{equation*}
    \mathit{SH}_{c}^{\mathit{eq}}(\Omega):=\mathit{HF}^{\mathit{eq}}(-cr),
  \end{equation*}
  and is only defined for $c\not\in\mathit{Spec}(\Omega)$.
\end{definition}
The above discussion produces continuation maps:
\begin{itemize}
\item $\mathit{SH}^{\mathit{eq}}_{c}(\Omega)\to \mathit{SH}^{\mathit{eq}}_{c'}(\Omega)$ if $c<c'$,
\end{itemize}
giving $c\mapsto \mathit{SH}_{c}^{\mathit{eq}}(\Omega)$ the structure of a persistence module.

\subsection{The PSS morphism}
\label{sec:pss-morphism}

Part of the theory supporting the Gutt--Hutchings capacities is the existence of an exact triangle of $\Q[[u]]$-modules:
\begin{equation}\label{eq:exact-triangle}
  \begin{tikzpicture}[baseline={(0,-0.3)}]
    \path (30:1)node[right](A){$\mathit{SH}_{c}^{\mathit{eq}}(\Omega)$}--(150:1)node[left](B){$\mathit{SH}^{\mathit{eq},+}_{c}(\Omega)$}--(270:1)node(C){$\mathit{HM}(W,\Q[u^{-1}])$};
  \draw[->] (A)--node[above]{}(B);
  \draw[->] (B)--node[pos=0.3,below left]{$\delta$}(C);
  \draw[->] (C)--node[pos=0.7,below right]{$\mathit{PSS}$}(A);
  \end{tikzpicture}
\end{equation}
where $c>0$ and $\mathit{HM}(W,\Q[u^{-1}])$ is the Morse homology of a suitable class of pseudogradients on $W$ with coefficients in $\Q[u^{-1}]$. Let us comment that:
\begin{itemize}
\item $\mathit{HM}(W,\Q[u^{-1}])=\mathit{HM}(W,\Q)\otimes \Q[u^{-1}]$,
\end{itemize}
and that $\mathit{HM}(W,\Q)$ contains a distinguished class denoted $1$ (the definition will be recalled in \S\ref{sec:morse-homology-conventions}). We avoid discussion of $\mathit{SH}^{G,+}(\Omega)$ by focusing on the other terms in the sequence.

For any $m\in \mathit{HM}(W,\Q)$, we can also speak of the classes:
\begin{equation*}
  u^{d}m=m\otimes u^{d}\in \mathit{HM}(W,\Q[u^{-1}])\text{ for }d\in \Z,
\end{equation*}
so that $u(u^{d}m)=u^{d+1}m$ and $u^{d}m=0$ for $d>0$.

For the PSS map\footnote{The name PSS comes from the paper \cite{piunikhin-salamon-schwarz-1996}.} appearing in \eqref{eq:exact-triangle} in a similar context, see \cite[\S4.1]{zhao-thesis-2016}; we will explain how this map is defined in \S\ref{sec:pss-map-s1} below.

With regard to diagram \eqref{eq:exact-triangle}, the $k$th \emph{Gutt--Hutchings capacity} is:
\begin{equation}\label{eq:standard-GH}
  c_{k}(\Omega):=\inf\set{c>0:\delta(u^{k-1}\zeta)=1\text{ for some }\zeta\in \mathit{SH}_{c}^{\mathit{eq},+}(\Omega)}.
\end{equation}
We will reformulate this definition to only refer to $\mathit{SH}^{\mathit{eq}}_{c}(\Omega)$ and $\mathit{PSS}$.

From basic algebra, the following are seen to be equivalent:
\begin{itemize}
\item $\delta(u^{k-1}\zeta)=1$,
\item $u^{k-1}(\delta(\zeta)-u^{1-k}1)=0$,
\item $\delta(\zeta)=u^{1-k}1+u^{2-k}m_{1}+u^{3-k}m_{2}+\dots$,
\item $\mathit{PSS}(u^{1-k}1+u^{2-k}m_{1}+\dots)=0$.
\end{itemize}
\begin{definition}\label{definition:GH}
  The $k$th \emph{Gutt--Hutchings capacity} is defined by:
  \begin{equation*}
    c_{k}(\Omega)=\inf\set{c:\mathit{PSS}(u^{1-k}1+u^{2-k}m_{1}+\dots)=0\text{ in }\mathit{SH}_{c}^{\mathit{eq}}(\Omega)},
  \end{equation*}
  where the vanishing should hold for some choice of $m_{i}$. By the above discussion, this is equivalent to the standard definition \eqref{eq:standard-GH}.
\end{definition}
The goals of this section are to describe the construction of the map $\mathit{PSS}$ and to recall its basic properties.

\subsubsection{Morse homology conventions}
\label{sec:morse-homology-conventions}

Let $G$ be a Morse--Smale pseudogradient on $W$ which agrees with $Z$ outside of a compact set. Define:
\begin{equation*}
  \mathit{CM}(G):=\textstyle\bigoplus \mathfrak{o}(x)
\end{equation*}
where the direct sum is over zeroes $x$ of $G$, and $\mathfrak{o}(x)$ is the orientation line for the stable manifold at $x$. The Morse differential:
\begin{equation*}
  d:\mathit{CM}(G)\to \mathit{CM}(G)
\end{equation*}
is computed by counting flow lines of $G$. Each zero $x$ with trivial stable manifold has a canonical generator $1_{x}\in \mathfrak{o}(x)$, since the zero vector space is canonically oriented.
\begin{definition}
  The \emph{unit cycle} $1\in \mathit{CM}(G)$ is defined to be the direct sum of $1_{x}$ over all zeros $x$ with trivial stable manifold. It is a fundamental result in Morse theory that $d(1)=0$.
\end{definition}

The above defines $\mathit{CM}(G)$ as the Morse complex ``over the integers.'' To obtain the Morse complex $\mathit{CM}(G)$ over another ring $S$, we simply tensor $\mathit{CM}(G)$ with the ring $S$, and extend the differential so that it is $S$-linear. The homology of $\mathit{CM}(G)\otimes S$ is denoted $\mathit{HM}(G,S)$.

Finally, we recall that for any two such pseudogradients, there are canonical chain homotopy equivalences relating their Morse complexes (called continuation isomorphisms); the resulting invariant of $W$ is denoted $\mathit{HM}(W,S)$.

\subsubsection{The PSS map in $S^{1}$-equivariant symplectic homology}
\label{sec:pss-map-s1}

Fix:
\begin{itemize}
\item $G$ as in \S\ref{sec:morse-homology-conventions}, 
\item $c>0$ so $c\not\in \mathit{Spec}(\Omega)$, and,
\item regular Borel data $H_{\eta,t}$ extending $H=-cr$.
\end{itemize}
The goal in this section is to define chain maps:
\begin{equation*}
  \mathit{PSS}:\mathit{CM}(G)\otimes \Q[u^{-1}]\to \mathit{CF}^{eq}(H_{\eta,t})
\end{equation*}
representing the map appearing in \eqref{eq:exact-triangle}.

Similarly to \S\ref{sec:continuation-maps}, introduce the moduli space $\mathscr{N}(H_{\eta,t})$ of solutions:
\begin{equation*}
  \left\{
    \begin{aligned}
      &v:\R\to S^{\infty},\quad u:\R\times \R/\Z\to W,\\
      &v'(s)=P(v(s)),\quad \partial_{s}u+J(u)(\partial_{t}u-X_{v(s),s,t}(u))=0,\\
      &\textstyle\int \omega(\partial_{s}u,\partial_{t}u-X_{v(s),s,t}(u))\d s\d t<\infty,
    \end{aligned}
  \right.
\end{equation*}
where now $X_{\eta,s,t}$ is the generator of (a small perturbation of) $\beta(s)H_{\eta,t}$. As in \S\ref{sec:moduli-spac-assoc}, we can extract from a solution $(v,u)$ asymptotics $\gamma_{+},j_{-},j_{+}$, but now the left asymptotic $u(-\infty)$ is a removable singularity in $W$, as in \cite{piunikhin-salamon-schwarz-1996}.

Let us denote by $\mathscr{N}(H_{\eta,t};\gamma_{+};j_{-},j_{+})$ the moduli space of solutions with the written asymptotics, and consider the evaluation at the left asymptotic as a smooth map valued in $W$. By picking the perturbation term in the definition of $X_{\eta,s,t}$ sufficiently generically, we can ensure that this evaluation map is transverse to the unstable manifolds $W_{x}$ of zeros $x$ of $G$. Thus we can define:
\begin{equation*}
  \mathfrak{P}_{j}:\mathit{CM}(G)\to \mathit{CF}(H_{t})
\end{equation*}
by the formula:
\begin{equation*}
  \mathfrak{P}_{j}|_{\mathfrak{o}(x)\to \mathfrak{o}(\gamma)}=\#\set{(v,u)\in \mathscr{N}(H_{\eta,t};\gamma;j,0):u(-\infty)\in W_{x}}/(\R/\Z).
\end{equation*}
In words, we count the rigid solutions of the PSS equation (modulo the circle action), which lie above a flow line $v$ joining the $u^{-j}$ circle to the $1$ circle.

Define:
\begin{equation*}
  \mathfrak{P}_{\mathit{eq}}:=\sum u^{j}\mathfrak{P}_{j}.
\end{equation*}
where we extend each $\mathfrak{P}_{j}$ to a map which is linear over $\Q[u^{-1}]$.
\begin{claim}
  The map $\mathfrak{P}_{\mathit{eq}}$ is a chain map, i.e., $\mathfrak{P}_{\mathit{eq}}d=d_{\mathit{eq}}\mathfrak{P}_{\mathit{eq}}$. Moreover, the chain homotopy class of $\mathfrak{P}_{\mathit{eq}}$ is independent of the choice of perturbation, and it commutes with the chain homotopy class of continuation maps.
\end{claim}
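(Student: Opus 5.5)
The chain-map identity comes from the standard compactness-and-gluing argument applied to the one-dimensional components of the PSS moduli spaces. Fix $j\ge 0$, a zero $x$ of $G$ and an orbit $\gamma$. By the $\tau$-identification I may take the right asymptotic index to be $0$. Consider the components of
\begin{equation*}
  \set{(v,u)\in \mathscr{N}(H_{\eta,t};\gamma;j,0):u(-\infty)\in W_{x}}/(\R/\Z)
\end{equation*}
of dimension one, where $x$ has index one larger than in the rigid count. Energy bounds follow from the monotonicity of $\beta(s)H_{\eta,t}$ and the asymptotics $H=-cr$. Together with the maximum principle near infinity (using that $J$ is $Z$-invariant and $H_{\eta,t}=H$ outside a compact set), these give Gromov--Floer compactness. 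Exactness of $\omega$ rules out sphere bubbling.

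The broken configurations in the boundary fall into three types.
\begin{enumerate}
\item A Morse flow line of $G$ breaks off at the removable end. This contributes the component of $\mathfrak{P}_{j}\circ d$.
\item A Floer cylinder breaks off at the output, lying over a flow line of $P$ from $u^{-j_{1}}$ to $1$, after a PSS solution over a flow line from $u^{-j}$ to $u^{-j_{1}}$. Applying $\tau^{j_{1}}$ to the first piece rewrites this as $d_{j-j_{1}}\circ\mathfrak{P}_{j_{1}}$.
\item The flow line $v$ of $P$ breaks at an intermediate critical circle while the cylinder does not break. Here I must check that the total count vanishes or is absorbed into the previous types.
\end{enumerate}
This third type is the delicate point, and the reason the connection was chosen flat near critical circles. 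Near the circle over $u^{-j_{1}}$ the coordinate $\theta_{j_{1}}$ is conserved and $H_{\eta,t}$ is locally constant in the transverse directions. In the dimension count, breaking $v$ alone without breaking $u$ happens in codimension one only when the cylinder on the piece over the broken part is $s$-independent, i.e.\ a trivial cylinder. These configurations are exactly the ones that the standard Borel construction treats via the $\R\times\R/\Z$ quotient. I expect them to match up as in \cite[\S2.2]{bourgeois-oancea-IMRN-2017} and \cite[\S2.3]{zhao-thesis-2016}, and I would cite those references rather than redo the analysis.

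Gluing then shows that every broken configuration of types (1) and (2) is an end of a one-dimensional component. Orientation lines, via \eqref{eq:isomorphism-orientation-line} and the complex orientations of the stable manifolds of $V$, make the signed count of ends vanish. Summing with weights $u^{j}$, and using that $u^{j_{1}}u^{j-j_{1}}=u^{j}$, gives $\mathfrak{P}_{\mathit{eq}}d=d_{\mathit{eq}}\mathfrak{P}_{\mathit{eq}}$.

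Independence of the perturbation follows by choosing a one-parameter family of perturbations and counting the rigid elements of the parametric moduli space. This yields a chain homotopy $\sum u^{j}\mathfrak{K}_{j}$, with the same boundary analysis plus the two endpoint contributions.

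Compatibility with continuation is a homotopy-of-homotopies argument. Compare $\beta(s)H_{\eta,t}$ followed by the monotone homotopy to $K_{\eta,t}$ with the PSS datum $\beta(s)K_{\eta,t}$ directly, using a family stretching the neck at a parameter $R\to\infty$. Monotonicity in $s$ is preserved throughout the family, so energy stays bounded. The resulting parametric count is a chain homotopy between $\mathfrak{c}\circ\mathfrak{P}^{H}_{\mathit{eq}}$ and $\mathfrak{P}^{K}_{\mathit{eq}}$.

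The main obstacle is the treatment of breakings of the $P$-flow line at intermediate critical circles (type (3)) and showing that the $u$-powers assemble correctly. For this I would rely on the flatness of $A$ near critical circles and on the $\tau$-invariance of the Borel data, exactly as in the proof that $d_{\mathit{eq}}^{2}=0$.
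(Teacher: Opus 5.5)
Your outline is essentially the paper's proof, which likewise only invokes the standard compactness-and-gluing arguments: Bourgeois--Oancea for $d_{\mathit{eq}}^{2}=0$, Zhao for equivariant continuation maps and the equivariant PSS map, Piunikhin--Salamon--Schwarz and Frauenfelder--Schlenk for the removable end, and Floer for independence of choices and compatibility with continuation. One correction to your ``type (3)'': if a piece of $v$ escapes to $s\to+\infty$, the pair $(v,u)$ splits into two levels and the upper level is counted by $d_{j_{1}}$ whatever $u$ does there, so no trivial cylinders are involved and this is already your type (2) (with your labelling that term is $d_{j_{1}}\circ\mathfrak{P}_{j-j_{1}}$, which gives the same sum); if instead a piece escapes to $s\to-\infty$, where $\beta=0$ and the $u$-equation does not see $v$, it sweeps out a free family of dimension $2(j-j_{1})-1\geq 1$, so this stratum is empty for generic data---it is this dimension count, not flatness of $A$ or a local constancy of $H_{\eta,t}$ that the paper never assumes, that disposes of it.
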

\begin{proof}
  The follows the same logic as why $d_{eq}^{2}=0$ \cite[\S2.2]{bourgeois-oancea-IMRN-2017}, and why continuation maps are chain maps for $d_{eq}$ \cite[pp.\,25]{zhao-thesis-2016}, together with the usual PSS arguments \cite{piunikhin-salamon-schwarz-1996,frauenfelder-schlenk-israeljm-2007}, and, of course, the arguments of \cite{floer-comm-math-phys-1989}. In fact, it is not hard to see that, if one picks $H_{\eta,t}$ appropriately then our map agrees with the one considered by \cite[Proposition 4.1.2]{zhao-thesis-2016}.
\end{proof}
The map induced on homology by $\mathfrak{P}_{eq}$ is denoted $\mathit{PSS}$ in \eqref{eq:exact-triangle}. This concludes our review of the Gutt-Hutchings capacity, and the setting of the stage for the proof of Theorem \ref{theorem:cartesian-product}.

\subsection{The K\"unneth map and the proof of Theorem \ref{theorem:cartesian-product}}
\label{sec:kunneth-map}

The key Floer theoretic operation required in this section is a ``K\"unneth morphism.'' We will use the same framework as \cite[\S 2.2]{brocic-cant-arXiv-2026}. We briefly recall the notation and set-up there:
\begin{itemize}
\item the Liouville domain is $Q\times D(a)$, with completion $W\times \C$;
\item the radial coordinate on $Q$ is $r_{1}$;
\item the radial coordinate on $\C$ is $r_{2}=a^{-1}\pi\abs{z}^{2}$;
\item $J_{1}$ is an almost complex structure on $W$, invariant under the Liouville flow outside of $Q$;
\item $J_{2}$ is the standard almost complex structure on $\C$;
\item $f(r)$ is a convex, non-decreasing function which is constant on $[0,\frac{1}{2}]$ and satisfies $f(r)=r$ for $r\ge 1$;
\item $\ell(r)=\log(f(r))$ is a smoothing of $\max\set{\log(r),0}$.
\end{itemize}
In \cite[\S2.2.1]{brocic-cant-arXiv-2026} we had introduced the special almost complex structure:
\begin{equation*}
  J:=Z^{*}_{-\ell(r_{2})}J_{1}\oplus J_{2}
\end{equation*}
on $W\times \C$, which is Liouville equivariant outside $\set{r_{1}\le 1}\cap \set{r_{2}\le 1}$ but is split on the strip $\set{r_{2}\le 1/2}$.

In \cite[\S2.2.2]{brocic-cant-arXiv-2026} we had introduced a special class of Hamiltonian systems. We modify this class slightly for use with the $S^{1}$-equivariant theory.

\begin{definition}\label{definition:admissible-for-K}
  Fix a small number $\delta>0$. Borel data $H_{\eta,t}$ on $W\times \C$ is \emph{admissible for the Künneth map} for $Q\times D(a)$ with:
  \begin{itemize}
  \item slopes $0<b_{1}$ and $0<b_{2}$ satisfying $b_{1}\not\in \mathit{Spec}(Q)$ and $b_{2}\not\in a\Z$, and
  \item perturbation $P_{\eta,t}$ supported in $Q$,
  \end{itemize}
  if it is given by:
  \begin{equation*}
    H_{\eta,t}=-b_{1}e^{\ell(r_{2})}\delta f(\delta^{-1}e^{-\ell(r_{2})}r_{1})-b_{2}r_{2}+\beta(2-2r_{2})P_{\eta,t}
  \end{equation*}
  and:
  \begin{enumerate}[label=(K\arabic*)]
  \item\label{item:K1} all orbits of the basepoint system $H_{t}$ lie in $W\times 0$, and,
  \item\label{item:K2} $H_{\eta,t}$ is regular Borel data.
  \end{enumerate}  
\end{definition}
\begin{lemma}
  For any $\delta>0$ small enough, and any slopes $b_{1},b_{2}$ as in Definition \ref{definition:admissible-for-K}, there exist perturbation terms $P_{\eta,t}$ such that the resulting $H_{\eta,t}$ satisfies \ref{item:K1} and \ref{item:K2}.
\end{lemma}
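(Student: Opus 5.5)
We need to show that for small $\delta$ and admissible slopes, there is a perturbation $P_{\eta,t}$ supported in $Q$ (cut off by $\beta(2-2r_{2})$, so effectively supported in $Q\times\set{r_{2}\le 1/2}$) which makes the Borel data satisfy \ref{item:K1} and \ref{item:K2}. The plan is first to analyse the unperturbed Hamiltonian
\begin{equation*}
  H^{0}=-b_{1}e^{\ell(r_{2})}\delta f(\delta^{-1}e^{-\ell(r_{2})}r_{1})-b_{2}r_{2},
\end{equation*}
following the computation in \cite[\S2.2.2]{brocic-cant-arXiv-2026}. Then choose $P_{\eta,t}$ small and generic, and check that it neither creates new orbits outside $W\times 0$ nor obstructs transversality.

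\emph{Orbits of $H^{0}$.} On $\set{r_{2}\le 1/2}$ we have $\ell(r_{2})=0$, so $H^{0}$ splits as $-b_{1}\delta f(\delta^{-1}r_{1})-b_{2}r_{2}$. The $\C$-factor is a rotation by $-b_{2}$ in the normalized angle. Since $b_{2}\not\in a\Z$, its only $1$-periodic orbit is $z=0$. The $W$-factor is constant on $\set{r_{1}\le\delta/2}$ and linear of slope $-b_{1}\not\in -\mathit{Spec}(Q)$ for $r_{1}\ge\delta$. Its $1$-periodic orbits are therefore the constants in $\set{r_{1}\le\delta/2}$ together with nonconstant orbits in the shell $\delta/2<r_{1}<\delta$. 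On $\set{r_{2}\ge 1/2}$ the function $e^{\ell}\delta f(\delta^{-1}e^{-\ell}r_{1})$ is homogeneous of degree one under the diagonal Liouville flow once $r_{2}\ge 1$. The plan is to reproduce the argument of \cite{brocic-cant-arXiv-2026}: the $r_{2}$-component of the flow is still a rotation of total angle determined by $b_{2}$ plus a term coming from $\partial_{r_{2}}$ of the first summand. For $\delta$ small, that term is $O(\delta)$ in the region $r_{1}\lesssim\delta e^{\ell}$ and is of the form $b_{1}\cdot(\text{something bounded})$ elsewhere. One must show this cannot close up to an integer multiple of $a$. Here "$\delta$ small enough" is used: the rotation number in the $z$-plane stays in a small neighbourhood of $b_{2}/a\not\in\Z$, so there are no $1$-periodic orbits with $z\neq 0$. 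Hence all orbits of $H^{0}$ lie in $W\times 0$, and they are degenerate (Morse--Bott) only in the $W$-directions.

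\emph{Choice of perturbation.} Take $P_{\eta,t}$ supported in a compact subset of $\mathit{int}(Q)$ (independent of $z$). It should be $C^{2}$-small, satisfy the equivariance $P_{e^{2\pi i\theta}\eta,t}=P_{\eta,t-\theta}$ and the $\tau$-invariance, and be such that the basepoint Hamiltonian on $W$ is nondegenerate. One constructs it first on a finite skeleton of $S^{\infty}$ (neighbourhoods of the critical circles and the finitely many flow-line strata of index $\le$ some bound), then extends equivariantly using the $\tau$-self-similarity. Because $\beta(2-2r_{2})=1$ near $z=0$, the perturbed system near $W\times 0$ is split: a nondegenerate system on $W$ times a nondegenerate rotation on $\C$. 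Thus the basepoint orbits are nondegenerate in $W\times\C$. Smallness of $P$ ensures the rotation-number estimate above persists, so \ref{item:K1} still holds.

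\emph{Regularity \ref{item:K2}.} This is the main obstacle. Parametrized Floer cylinders might a priori leave $W\times 0$, where the perturbation is cut off and cannot be used to achieve transversality. I would first try an open-mapping/maximum-principle argument in the $\C$-factor, using that $J$ is split on $\set{r_{2}\le 1/2}$ and that the $z$-component of a solution asymptotic to orbits in $W\times 0$ satisfies a linear Cauchy--Riemann type equation. The aim is to show that the projection $z\circ u$ vanishes identically, or at least to use the integrated maximum principle as in \cite{brocic-cant-arXiv-2026} to confine solutions to $\set{r_{2}\le 1/2}$. Once confined, each solution lies in the split region. The linearized operator then splits into a $W$-part, which is made surjective by the standard Sard--Smale argument over generic equivariant $P_{\eta,t}$ (as in \cite{bourgeois-oancea-IMRN-2017,zhao-thesis-2016}), and a $\C$-part. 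The $\C$-part is the operator $\partial_{s}+i\partial_{t}+b_{2}/a$ (up to normalization) on $\R\times\R/\Z$ with non-resonant asymptotics, hence an isomorphism. Surjectivity of the total operator therefore follows, giving \ref{item:K2}.
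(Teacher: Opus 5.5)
Your treatment of \ref{item:K1} is the paper's argument. The paper's proof only cites \cite[Lemma 2.8]{brocic-cant-arXiv-2026} and observes that the rotation speed in the $\C$-factor can be made arbitrarily close to $b_{2}\notin a\Z$. Three corrections are needed there.

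First, on $\set{r_{2}\le 1/2}$ one has $\ell\equiv\ell(0)=\log f(0)$, not $0$. This is harmless, since the $W$-factor is then $-b_{1}\rho_{1}$.

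Second, ``$b_{1}\cdot(\text{something bounded})$ elsewhere'' would not suffice. What you need, and what is true, is a uniform $O(\delta)$ bound. With $x=\delta^{-1}e^{-\ell(r_{2})}r_{1}$ one computes
\begin{equation*}
  -\partial_{r_{2}}H^{0}=b_{2}+b_{1}\delta f'(r_{2})\bigl(f(x)-xf'(x)\bigr)\in[b_{2},b_{2}+b_{1}\delta].
\end{equation*}
This holds because $0\le f'\le 1$ and $0\le f(x)-xf'(x)\le f(0)\le 1$ by convexity; the correction term even vanishes for $x\ge 1$.

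Third, every $H_{\eta,t}$ depends on $z$ only through $r_{2}$, so $r_{2}$ is conserved even after adding $\beta(2-2r_{2})P_{\eta,t}$. Hence the $z$-component of an orbit is a rotation by $\pm\frac{2\pi}{a}\int_{0}^{1}\kappa\,\d t$, where $\kappa=-\partial_{r_{2}}H$ lies within $b_{1}\delta+O(\|P\|_{C^{0}})$ of $b_{2}$. This is what makes your rotation-number argument rigorous after perturbing.

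The genuine gap is in \ref{item:K2}. Your architecture is right, and more explicit than the paper, which again only cites \cite{brocic-cant-arXiv-2026}. If every solution $(v,u)$ lies in $W\times 0$, the linearization is block-diagonal. The $W$-block, which carries the $v$-variations, is made onto by generic equivariant $P_{\eta,t}$. The $\C$-block $\partial_{s}+i\partial_{t}+c_{0}$ is an isomorphism. This splitting is really needed, since $\beta(2-2r_{2})P_{\eta,t}$ has no $z$-gradient near $W\times 0$ and so cannot kill cokernel in the $\C$-direction.

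However, you leave ``every solution lies in $W\times 0$'' as an aim, and the tools you name do not deliver it. A maximum principle for $\abs{u_{2}}$ is not available, because the zeroth-order coefficient below varies along the cylinder. A compactness confinement as in \cite[Lemma 2.9]{brocic-cant-arXiv-2026}, which the paper uses only in the next lemma, needs $\delta,P$ small and controls the solutions contributing to the differential. Regularity, by contrast, is needed for all moduli spaces \eqref{eq:moduli-space}, for all $j_{-}$.

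The missing idea is a winding-number argument, which needs no smallness. The $\C$-block of $J=Z^{*}_{-\ell(r_{2})}J_{1}\oplus J_{2}$ is $i$ everywhere, not only on $\set{r_{2}\le 1/2}$, and $H_{\eta,t}$ depends on $z$ only via $r_{2}$. Therefore, on all of $\R\times\R/\Z$,
\begin{equation*}
  \partial_{s}u_{2}+i\partial_{t}u_{2}+c(s,t)u_{2}=0,\qquad c\text{ smooth and real},\quad c\equiv c_{0}\text{ for }\abs{s}\ge s_{0},
\end{equation*}
where $c_{0}/2\pi=\pm b_{2}/a\notin\Z$.

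Expand $u_{2}$ on the ends in the modes $e^{(2\pi k-c_{0})s}e^{2\pi ikt}$. If $u_{2}\not\equiv 0$, its winding number around $\set{s}\times\R/\Z$ is at most $\lfloor c_{0}/2\pi\rfloor$ for $s\gg 0$, and at least $\lfloor c_{0}/2\pi\rfloor+1$ for $s\ll 0$. On the other hand, by the Carleman similarity principle the zeros of $u_{2}$ are isolated and of positive order, so the winding number is non-decreasing in $s$. These two facts contradict each other, so $u_{2}\equiv 0$, and your splitting argument then applies to every moduli space.

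This argument uses that both ends have the same slope $b_{2}$. That is consistent with the paper's remark that continuation solutions from $b_{2}$ to $b_{2}'$ may leave $W\times 0$.
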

\begin{proof}
  The argument is the same as the one given in \cite[Lemma 2.8]{brocic-cant-arXiv-2026}, with one small modification: in the notation given there, one can make the analogue of ``$bS$'' arbitrarily close to $b_{2}$, which is not a period of $X_{r_{2}}$, and so no orbits lying above the region $r_{2}\ne 0$ lie on one-periodic orbits. 
\end{proof}

\begin{lemma}\label{lemma:kunneth-lemma-technical}
  For $\delta>0$ and $P_{\eta,t}$ small enough, and $b_{1},b_{2}$ as in Definition \ref{definition:admissible-for-K}, there is an isomorphism of complexes:
  \begin{equation}\label{eq:kunneth-iso}
   \mathfrak{K}:\mathit{CF}^{eq}(H_{\eta,t})\to \mathit{CF}^{eq}(-b_{1}\rho_{1}+P_{\eta,t})\otimes_{\Q[[u]]}x_{k}\Q[u^{-1}]
  \end{equation}
  where $x_{k}$ is a formal generator corresponding to the orbit of $-b_{2}X_{r_{2}}$ located at the origin, with the label $k=\lfloor b_{2}/a\rfloor$, and $\rho_{1}=e^{\ell(0)}\delta f(\delta^{-1}e^{-\ell(0)}r_{1})$.

  Secondly, for $b_{2}/a\in (0,1)$, there is a commutative diagram:
  \begin{equation*}
    \begin{tikzcd}
     \mathit{CM}(W\times\C,\Q[u^{-1}])\arrow[d,"\mathfrak{P}"]\arrow[r,"\mathfrak{K}"]&\mathit{CM}(W,\Q[u^{-1}])\arrow[d,"\mathfrak{P}\otimes \id"]\\
\mathit{CF}^{eq}(H_{\eta,t})\arrow[r,"\mathfrak{K}"]&\mathit{CF}^{eq}(-b_{1}\rho_{1}+P_{\eta,t})\otimes_{\Q[[u]]}x_{0}\Q[u^{-1}],
    \end{tikzcd}
  \end{equation*}
  where $\mathit{PSS}$ is the map represented by the chain maps $\mathfrak{P}_{\mathit{eq}}$ defined in \S\ref{sec:pss-map-s1}.
  
  Finally, if $H'_{\eta,t}$ is defined similarly for $b_{2}'>b_{2}$, and the same $b_{1}$ and $P_{\eta,t}$, then the continuation map:
  \begin{equation*}
    \mathfrak{c}:\mathit{CF}^{\mathit{eq}}(H_{\eta,t})\to \mathit{CF}^{\mathit{eq}}(H_{\eta,t}')
  \end{equation*}
  satisfies:
  \begin{equation}\label{eq:stated-fact}
    \text{if $\d_{eq}\zeta=0$ and $\mathfrak{c}(\zeta)=\d_{eq} \xi$, then $u^{\tau}\zeta=\d_{eq} \mu$ for some cycle $\mu$,}
  \end{equation}
  where $\tau=k'-k$.
\end{lemma}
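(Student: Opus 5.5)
The isomorphism \eqref{eq:kunneth-iso} will come from a splitting argument for the moduli spaces defining $d_{\mathit{eq}}$, following the pattern of \cite[\S2.2]{brocic-cant-arXiv-2026}.

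First, by \ref{item:K1}, every generator of $\mathit{CF}^{eq}(H_{\eta,t})$ has the form $\gamma\times 0$ with $\gamma$ an orbit of $-b_{1}\rho_{1}+P_{t}$. We therefore define $\mathfrak{K}$ on generators by $\gamma\times 0\mapsto \gamma\otimes x_{k}$. The orientation lines match because $\mathfrak{o}(\gamma\times 0)\simeq\mathfrak{o}(\gamma)\otimes\mathfrak{o}(x_{k})$, and $\mathfrak{o}(x_{k})$ is canonically trivialized by the complex linear structure of $\C$.

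Next we show that $\mathfrak{K}$ intertwines the differentials, which requires controlling the $\C$-component of the Floer cylinders. We write a solution as $(v,u_{1},u_{2})$. On $\set{r_{2}\le1/2}$ the almost complex structure $J$ is split and the Hamiltonian equals $-b_{1}\rho_{1}-b_{2}r_{2}+P_{\eta,t}$. The open mapping and maximum principle argument of \cite{brocic-cant-arXiv-2026}, applied with $\delta$ and $P$ small, shows that $u$ cannot leave $\set{r_{2}\le1/2}$, and that $u_{2}$ solves the linear, $\eta$-independent equation $\partial_{s}u_{2}+i(\partial_{t}u_{2}+2\pi i(b_{2}/a)u_{2})=0$. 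Its finite energy solutions asymptotic to $0$ at both ends are identically zero, since the winding-number or index count forces this. Hence $u_{2}\equiv 0$, and the moduli spaces for $H_{\eta,t}$ coincide with those for $-b_{1}\rho_{1}+P_{\eta,t}$.

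Transversality splits: the linearized operator is the direct sum of the $W$-operator and the linear operator on $\C$. The latter is an isomorphism because $b_{2}\notin a\Z$, so regularity is inherited and the orientation isomorphisms are compatible. On the right-hand side the $u$-action on $x_{k}\Q[u^{-1}]$ is trivial through the differential, since $x_{k}$ is closed and its factor contributes no higher $d_{j}$. This gives the chain isomorphism.

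For the PSS square with $b_{2}<a$, we apply the same splitting to $\mathscr{N}$. The $u_{2}$-component now solves a linear equation with a removable singularity at $-\infty$ and is asymptotic to $0$ at $+\infty$, with slope below $a$, so the only solution is $u_{2}\equiv 0$ and the left endpoint lies in $W\times 0$. On the Morse side we choose $G=G_{1}\oplus(-\text{radial on }\C)$, which has a single index-$0$ critical point at $0\in\C$. Then $\mathfrak{K}$ on Morse complexes is projection to $W\times0$, and the square commutes on the nose.

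The hardest step is \eqref{eq:stated-fact}, where the $\C$-factor carries genuine equivariant information. Crossing $a\Z$ changes the generator from $x_{k}$ to $x_{k'}$. Via the splitting, the continuation map becomes $\id\otimes\mathfrak{c}_{2}$, where $\mathfrak{c}_{2}:x_{k}\Q[u^{-1}]\to x_{k'}\Q[u^{-1}]$ is the equivariant continuation map of the disk. We would compute $\mathfrak{c}_{2}$ from the known equivariant homology of $\C$ with linear Hamiltonians, following \cite[\S4]{gutt-hutchings-AGT-2018}: it is multiplication by $u^{-\tau}$, that is $x_{k}u^{-m}\mapsto x_{k'}u^{-m-\tau}$, since each crossing of a multiple of $a$ shifts the degree by $2$. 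Granting this, if $\zeta\otimes x_{k}u^{-m}$ has image $d\xi$, the tensor structure shows that the $W$-part of $\zeta$ becomes a boundary after multiplying by $u^{\tau}$, which is exactly \eqref{eq:stated-fact}. The main obstacle is justifying that the continuation cylinders also split, since $J$ is not split everywhere during the homotopy. To handle this we would keep the homotopy supported in the $r_{2}$-slope and rerun the maximum-principle argument.
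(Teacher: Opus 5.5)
Your first two parts are essentially the paper's argument: confinement to $\set{r_{2}\le 1/2}$ via the compactness argument of \cite{brocic-cant-arXiv-2026}, then the split equation, split linearization and a stabilized pseudogradient. There is one caveat in the PSS square. There $u_{2}\equiv 0$ is the only disk-factor solution for \emph{every} $b_{2}>0$, since Fourier modes would need $n\ge 0$ at the removable end and $n<-b_{2}/a$ at the output. The hypothesis $b_{2}/a\in(0,1)$ is needed instead so that the disk-factor PSS operator has index $0$, hence is an isomorphism. This is the transversality input you never state; for $b_{2}/a\in(k,k+1)$ that operator has a $2k$-dimensional cokernel.

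The genuine gap is in the third part, where you compute the disk-factor continuation map in the wrong direction. It is $q\,u^{\tau}$, i.e.\ $x_{k}u^{-m}\mapsto q\,x_{k'}u^{\tau-m}$ with $q\ne 0$, which kills $x_{k},\dots,x_{k}u^{1-\tau}$. Your shift $x_{k}u^{-m}\mapsto x_{k'}u^{-m-\tau}$ has two problems:
\begin{itemize}
\item It is not $\Q[[u]]$-linear on $\Q[u^{-1}]$, because it fails to commute with $u$ on $x_{k}u^{0}$.
\item If it were right, $\mathit{PSS}(u^{1-k}1)$ would never die and every $c_{k}(D(a))$ would be infinite.
\end{itemize}
The correct direction comes from the index. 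The non-equivariant disk-factor continuation operator has index $2(k-k')=-2\tau$, with trivial kernel and cokernel spanned by the modes $-k'\le n\le -k-1$. So the components $\mathfrak{c}_{j}$, counting flow lines $v$ from $u^{-j}$ to $1$, vanish for $j<\tau$ by dimension count. That vanishing is exactly what produces the $u^{\tau}$ in the conclusion. Your final step, ``the $W$-part becomes a boundary after multiplying by $u^{\tau}$,'' does not follow from the map you wrote.

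Your proposed remedy, keeping everything split at $u_{2}\equiv 0$ via a maximum principle, also cannot work. Because the disk-factor index is negative, the solutions with $u_{2}\equiv0$ are not transversally cut out in the parametric problem. You must add $\eta$-dependent perturbations $V_{\eta,s,t}$ on the $\C$-factor, and these produce nonzero $u_{2}$. Moreover, the $W$-equation depends on the same flow line $v$, so the moduli space is a fibre product over $v$, not a product, and the continuation map is not of the form $\id\otimes(\text{disk map})$.

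The missing argument, which is the paper's, runs as follows:
\begin{itemize}
\item Rigid pairs $(v,u_{2})$ occur only for $j=\tau$, with signed count $q\ne0$; this is forced by unboundedness of the capacities of the disk.
\item For each such pair, $u_{1}$ solves an ordinary continuation equation, so $q^{-1}\mathfrak{c}_{\tau}$ is a non-equivariant quasi-isomorphism.
\item Set $\tilde{\mathfrak{c}}=\mathfrak{c}_{\tau}+u\mathfrak{c}_{\tau+1}+\dots$. It is a chain map with $\mathfrak{c}=u^{\tau}\tilde{\mathfrak{c}}$, and a filtration argument in powers of $u$ shows it is an equivariant quasi-isomorphism.
\item Then $\tilde{\mathfrak{c}}(u^{\tau}\zeta)=d_{\mathit{eq}}\xi$ forces $u^{\tau}\zeta$ to be exact.
\end{itemize}
The mixed higher terms $\mathfrak{c}_{j}$, $j>\tau$, are never computed; the filtration argument is what lets you avoid them.
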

\begin{proof}
  The idea for the definition of $\mathfrak{K}$ is the same as the one given in \cite[\S2.2]{brocic-cant-arXiv-2026}, and one which is common to most treatments of the K\"unneth isomorphism in Morse type theories.

  One uses \ref{item:K1} to show the orbits of $H_{t}$ are pairs $(\gamma,x_{k})$ where:
  \begin{itemize}
  \item $\gamma$ is an orbit of $-b_{1}\rho_{1}+P_{t}$, and,
  \item $x_{k}$ is the constant orbit of $-b_{2}r_{2}$ located at the origin.
  \end{itemize}
  This directly identifies the $\Q[[u]]$-modules underlying the complexes (one also shows that the orientation lines split $\mathfrak{o}(\gamma,x_{k})=\mathfrak{o}(\gamma)\otimes \mathfrak{o}(x_{k})$).

  The next step is to compute the differential with respect to this identification. By the compactness argument given in \cite[Lemma 2.9]{brocic-cant-arXiv-2026}, one proves that for $\delta$ and $P_{\eta,t}$ small enough, all solutions contributing to the differential remain in the region $r_{2}\le 1/2$ and therefore satisfy the split equation:
  \begin{equation*}
    \left\{
      \begin{aligned}
        &v:\R\to S^{\infty}\text{ flow line for $P$},\quad u=(u_{1},u_{2}):\R\times \R/\Z\to W\times \C,\\
        &\partial_{s}u_{1}+J_{1}(u_{1})(\partial_{t}u_{1}-X_{v(s),t}(u_{1}))=0,\\
        &\partial_{s}u_{2}+J_{2}(u_{2})(\partial_{t}u_{2}-b_{2}X_{r_{2}}(u_{2}))=0,\\
      \end{aligned}
    \right.
  \end{equation*}
  (with the finite energy condition), where we relabel:
  \begin{itemize}
  \item $X_{\eta,t}$ as the generator of $-b_{1}\rho_{1}+P_{\eta,t}$,
  \item $J_{1}$ as $J|_{W\times \set{0}}$ (this is a slight abuse of notation, since $J_{1}$ appears in the definition of $J$).
  \end{itemize}
  This implies that the identification $\mathfrak{K}$ satisfies:
  \begin{equation*}
    \mathfrak{K}\circ d_{\mathit{eq}}=(d_{\mathit{eq}}\otimes 1)\circ \mathfrak{K},
  \end{equation*}
  i.e., it is an isomorphism of chain complexes.

  The second part concerning the PSS maps follows a similar reasoning (a compactness argument proves solutions remain concentrated at $W\times \set{0}$, and the equation splits as above). For this step, one should pick the pseudogradient on $W\times \C$ to be a stabilization of a pseudogradient on $W$. There is an additional step concerning the linearization of the PSS equation, where one uses $b_{2}/a\in (0,1)$; this step is described in \cite[Lemma 2.11]{brocic-cant-arXiv-2026}.

  The final part a bit more subtle, especially if $b_{2}$ and $b_{2}'$ lie in different components of $\R\setminus a\Z$, as we need to invoke the additional perturbations in the definition of the continuation maps, and the solutions can no longer be guaranteed to remain in $W\times \set{0}$.

  By perturbing slightly the equation on the second factor, one arranges that all solutions solve the equation:
  \begin{equation}\label{eq:continuation-map-eq}
    \left\{
      \begin{aligned}
        &v:\R\to S^{\infty}\text{ flow line of $P$},\quad u=(u_{1},u_{2}):\R\times \R/\Z\to W\times \C,\\
        &\partial_{s}u_{1}+J_{1}(u_{1})(\partial_{t}u_{1}-X_{v(s),t}(u_{1}))=0,\\
        &\partial_{s}u_{2}+J_{2}(u_{2})(\partial_{t}u_{2}-b(s)X_{r_{2}}(u_{2})+V_{v(s),s,t}(u_{2}))=0,
      \end{aligned}
    \right.
  \end{equation}
  where $b(s)$ interpolates from $b_{2}$ to $b_{2}'$ and where $V_{\eta,s,t}$ is an $s$-dependent perturbation satisfying $V_{e^{2\pi i \theta}\eta,s,t}=V_{\eta,s,t-\theta}$. Here $X_{\eta,t}$ and $J_{1}$ are as in the first part of the proof.
  
  To prove the statement involving $u^{\tau}$, we recall in a bit more detail how the continuation map is actually defined:
  \begin{equation*}
    \mathfrak{c}=\mathfrak{c}_{0}+u\mathfrak{c}_{1}+u^{2}\mathfrak{c}_{2}+\dots,
  \end{equation*}
  where $\mathfrak{c}_{j}$ counts those solutions of \eqref{eq:continuation-map-eq} for which $v$ joins the critical circle over $u^{-j}$ to the critical circle over $1$. If $\tau=k'-k$ as in the statement, then we claim:
  \begin{enumerate}[label=(\roman*)]
  \item\label{roman-fact-1} $\mathfrak{c}_{0}=\dots=\mathfrak{c}_{d-1}=0$,
  \item\label{roman-fact-2} $\mathfrak{c}_{\tau}$ is a quasi-isomorphism for the non-equivariant differential.
  \end{enumerate}
  It follows easily from \ref{roman-fact-1} that:
  \begin{equation*}
    \tilde{\mathfrak{c}}=\mathfrak{c}_{\tau}+u\mathfrak{c}_{\tau+1}+u^{2}\mathfrak{c}_{\tau+2}+\dots
  \end{equation*}
  is a chain map for $d_{\mathit{eq}}$ and that $\mathfrak{c}=u^{\tau}\tilde{\mathfrak{c}}$.
  
  By a spectral sequence argument, $\tilde{\mathfrak{c}}$ is a quasi-isomorphism for $d_{\mathit{eq}}$, because of \ref{roman-fact-2}. Thus the stated fact \eqref{eq:stated-fact} follows.

  It remains only to prove \ref{roman-fact-1} and \ref{roman-fact-2}. The key idea is to split the equation \eqref{eq:continuation-map-eq} into two parts; the first part is the space $\mathscr{D}$ of pairs $(v,u_{2})$ solving:
  \begin{equation*}
    \left\{
      \begin{aligned}
        &v:\R\to S^{\infty}\text{ flow line of $P$},\quad u_{2}:\R\times \R/\Z\to \C,\\
        &\partial_{s}u_{2}+J_{2}(u_{2})(\partial_{t}u_{2}-b(s)X_{r_{2}}(u_{2})+V_{v(s),s,t}(u_{2}))=0,
      \end{aligned}
    \right.
  \end{equation*}
  This moduli space is relatively well-understood, since there is only one possible orbit $x_{k}$ at the input and only one possible orbit $x_{k'}$ at the output, and both orbits have a well-understood Conley-Zender index so the Fredholm index of $u_{2}$ is known to be $2(k-k')$. This dimensional argument implies that there are no solutions (assuming a generic perturbation) if $v$ joins $u^{-j}$ to $1$ for $j<\tau$; this proves \ref{roman-fact-1}.

  Similarly, the only rigid solutions of this equation are those when $v$ joins $u^{-\tau}$ to $1$. Since we know the Gutt-Hutchings capacities of the disk are unbounded, the count of these rigid solutions must be a non-zero rational number $q$ (otherwise the $S^{1}$-equivariant continuation map would be identically zero, and the Gutt-Hutchings capacities would be bounded). Having established, this, we observe that \eqref{eq:continuation-map-eq} counts pairs $((v,u_{2}),u_{1})$ such that $(v,u_{2})\in \mathscr{D}$ and $u_{1}$ solves:
  \begin{equation*}
    \partial_{s}u_{1}+J_{1}(u_{1})(\partial_{t}u_{1}-X_{v(s),t}(u_{1}))=0,
  \end{equation*}
  which, for any fixed $(v,u_{2})\in \mathscr{D}$ is just the continuation map equation. Thus we conclude that $q^{-1}\mathfrak{c}_{\tau}$ is in the chain homotopy class of a continuation map, which is a quasi-isomorphism. This proves \ref{roman-fact-2}.
\end{proof}

\begin{proof}[Proof of Theorem \ref{theorem:cartesian-product}]
  The first step is to use the second part of Lemma \ref{lemma:kunneth-lemma-technical} to show that the following equation cannot be solved in $\mathit{CF}^{eq}(H_{\eta,t})$:
  \begin{equation*}
    \mathfrak{P}(u^{-i_{0}}1+u^{1-i_{0}}m_{1}+\dots)=d_{eq}(\mu)
  \end{equation*}
  if $H_{\eta,t}$ is as in Definition \ref{definition:admissible-for-K} for any slope $b_{1}$ and with slope $b_{2}/a\in (0,1)$.

  Now we use the third part of Lemma \ref{lemma:kunneth-lemma-technical} to conclude that:
  \begin{equation*}
    \mathfrak{P}(u^{-i_{0}-k}1+u^{1-i_{0}-k}m_{1}+\dots)=d_{eq}(\mu)
  \end{equation*}
  cannot be solved, if $H'_{\eta,t}$ is as in Definition \ref{definition:admissible-for-K} with slope $b_{2}'/a\in (k,k+1)$. Indeed, if we could solve this equation, then we conclude that:
  \begin{equation*}
    \mathfrak{P}(u^{-i_{0}-k}1+u^{1-i_{0}-k}m_{1}+\dots)\in \mathit{CF}^{\mathit{eq}}(H_{\eta,t})
  \end{equation*}
  is mapped to $d_{eq}(\mu)$ by the continuation map $\mathfrak{c}$, and so by the third part of Lemma \ref{lemma:kunneth-lemma-technical} we conclude that:
  \begin{equation*}
    u^{k}\mathfrak{P}(u^{-i_{0}-k}1+u^{1-i_{0}-k}m_{1}+\dots)=\d_{eq}(\xi)\text{ holds in }\mathit{CF}^{\mathit{eq}}(H_{\eta,t}),
  \end{equation*}
  but this contradicts the first paragraph of the proof.

  Let us now take $b=b_{1}=b_{2}'>ka$. Then $H_{\eta,t}'$ agrees with $-br$ where
  \begin{equation*}
    r=e^{\ell(r_{2})}\delta f(\delta^{-1}e^{-\ell(r_{2})}r_{1})+r_{2},
  \end{equation*}
  producing $K=\set{r\le 1}\subset Q\times D(a)$, as in \cite[Definition 2.6]{brocic-cant-arXiv-2026}. We have therefore shown that:
  \begin{equation*}
    c_{k+i_{0}+1}(K)>ka\implies \liminf_{k\to\infty}\frac{c_{k}(K)}{k}\ge a,
  \end{equation*}
  as desired.
\end{proof}

\subsection{Proof of Lemma \ref{lemma:aspherical}}
\label{sec:proof-lemma-refl}

Finally, we briefly review the proof of Lemma \ref{lemma:aspherical}. The key result we will is need is the main result of \cite[Chapter 12]{abouzaid-EMS-2015} is that there is a local system $\mathfrak{L}$ on the free loop space, which restricts to the orientation local system $\mathfrak{o}(TL)$ when $\mathfrak{L}$ is pulled back to the space of constant loops, and a commutative diagram:
\begin{equation}\label{eq:abouzaid-diagram}
  \begin{tikzcd}
    H_{*}(L,\mathfrak{o}(TL))\arrow[d,"\simeq"]\arrow[r,"\iota"]&H_{*}(\Lambda L,\mathfrak{L})\arrow[d,"\simeq"]\\
      \mathit{HM}(T^{*}L,\Q)\arrow[r,"\mathit{PSS}"]&\mathit{SH}(T^{*}L).
  \end{tikzcd}
\end{equation}
Moreover, the commutative diagram respects the decomposition into summands corresponding to free homotopy classes.

\begin{lemma}
  If $L$ is aspherical, then the map $\iota$ in \eqref{eq:abouzaid-diagram} is an isomorphism onto the summand of contractible loops.
\end{lemma}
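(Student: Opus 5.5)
The plan is to identify the contractible component $\Lambda_{0}L$ of the free loop space with $L$ itself, up to homotopy, using asphericity. The map $\iota$ in \eqref{eq:abouzaid-diagram} is induced by the inclusion of constant loops $c:L\to \Lambda L$. This inclusion lands in $\Lambda_{0}L$, and it has a left inverse given by the evaluation map $\mathit{ev}:\Lambda_{0}L\to L$ at the basepoint $0\in \R/\Z$. The evaluation map is a Hurewicz fibration whose fiber over $x$ is the based contractible loop space $\Omega_{0}(L,x)$, i.e., the component of the identity in $\Omega(L,x)$. Since $\pi_{i}(\Omega(L,x))=\pi_{i+1}(L,x)=0$ for $i\ge 1$ when $L$ is aspherical, each component of the based loop space is weakly contractible. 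The long exact sequence of the fibration then shows that $\mathit{ev}$ is a weak homotopy equivalence $\Lambda_{0}L\to L$, and therefore $c:L\to \Lambda_{0}L$, being a section, is also a weak homotopy equivalence.

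Next I will pass to homology with local coefficients. By \cite{abouzaid-EMS-2015}, $c^{*}\mathfrak{L}\simeq \mathfrak{o}(TL)$. Since $c$ is a weak equivalence onto $\Lambda_{0}L$, there is a unique local system on $\Lambda_{0}L$ restricting to $\mathfrak{o}(TL)$, up to isomorphism: local systems are classified by representations of $\pi_{1}$, and $c$ induces an isomorphism on $\pi_{1}$. Thus $\mathfrak{L}|_{\Lambda_{0}L}\simeq \mathit{ev}^{*}\mathfrak{o}(TL)$. Weak equivalences induce isomorphisms on homology with twisted coefficients, for instance by the Serre spectral sequence or by CW approximation. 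Hence $\iota=c_{*}:H_{*}(L,\mathfrak{o}(TL))\to H_{*}(\Lambda_{0}L,\mathfrak{L})$ is an isomorphism. The other summands $H_{*}(\Lambda_{\alpha}L,\mathfrak{L})$ for nontrivial free homotopy classes $\alpha$ receive nothing from $\iota$, because constant loops are contractible. This gives the claim that $\iota$ is an isomorphism onto the contractible summand.

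The main point requiring care is the identification of the local system $\mathfrak{L}$ on $\Lambda_{0}L$ with the pullback of its restriction to constant loops. The argument via $\pi_{1}$-representations handles this, but one must make sure that the statement in \cite{abouzaid-EMS-2015} gives an isomorphism of local systems, not merely of fibers. Also, the map $c_{*}$ appearing in the diagram should be the honest pushforward along $c$ with the isomorphism $c^{*}\mathfrak{L}\simeq\mathfrak{o}(TL)$, which is how I read \cite[Chapter 12]{abouzaid-EMS-2015}. Everything else is standard homotopy theory.
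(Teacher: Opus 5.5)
Your argument is correct and follows the same route as the paper, whose proof is just a citation of \cite[Lemma 5.2]{latschev-EMS-2015} for the loop-space fact that you prove directly: the evaluation fibration $\Lambda_{0}L\to L$ has weakly contractible fiber, so the constant-loop section $c$ is a weak equivalence. Your step identifying $\mathfrak{L}|_{\Lambda_{0}L}$ with $\mathit{ev}^{*}\mathfrak{o}(TL)$ is harmless but not needed, since a weak equivalence $c$ induces isomorphisms $H_{*}(L,c^{*}\mathfrak{L})\to H_{*}(\Lambda_{0}L,\mathfrak{L})$ for any local system $\mathfrak{L}$.
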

\begin{proof}
  This follows from \cite[Lemma 5.2]{latschev-EMS-2015}.
\end{proof}

The argument in \cite[Claim 1.7]{brocic-cant-arXiv-2026} then shows $\mathit{PSS}(1)\in \mathit{SH}^{eq}(T^{*}L)$ does not vanish, using the surjectivity of $\mathit{PSS}:\mathit{HM}(T^{*}L,\Q)\to \mathit{SH}(T^{*}L)$ onto the summand of contractible loops. This proves that the first Gutt-Hutchings capacity $c_{1}(DT^{*}L)$ is infinite for any aspherical manifold $L$, as desired.\hfill$\square$

\bibliographystyle{amsalpha}
\bibliography{../citations}
\end{document}